\documentclass[preprint,12pt]{elsarticle}
\usepackage[top=3cm, bottom=3cm, left=2.2cm, right=2.2cm]{geometry}
\usepackage{amssymb}    
\usepackage{amsmath}    
\usepackage{amsthm}     
\usepackage{mathtools}  
\usepackage{cases}      
\usepackage{hyperref}   
\usepackage{multirow}  
\usepackage{array}     
\usepackage{makecell}  
\usepackage{ragged2e}  
\usepackage{adjustbox}   

\usepackage[table]{xcolor}

\newtheorem{theorem}{Theorem}[section]    
\newtheorem{lemma}[theorem]{Lemma}        
\newtheorem{definition}[theorem]{Definition}  
\newtheorem{proposition}[theorem]{Proposition} 

\newcommand{\R}{{\mathbb R}}

\begin{document}

\begin{frontmatter}

\title{On a two-species Keller-Segel model with degenerate diffusion and two stimuli}

\author{Shen Bian \corref{cor1}}
\cortext[cor1]{Corresponding author. E-mail: bianshen66@163.com} 

\affiliation{organization={Department of Mathematics, Beijing University of Chemical Technology},
            addressline={No. 15 East North Third Ring Road, Chaoyang District},
            city={Beijing},
            postcode={100029},
            country={China}}


\begin{abstract}
This paper investigates a two-species chemotaxis system with degenerate diffusion in the whole space $\R^d(d \ge 3)$. The diffusion of each species is governed by porous-medium-type operators. Under suitable conditions on the diffusion and aggregation exponents, we establish the global existence of uniformly bounded weak solutions. The proof hinges on a refined energy estimate that exploits the regularizing effect of degenerate diffusion to counteract the chemotactic aggregation. Furthermore, under additional assumptions on the system parameters, we derive exponential convergence rates of the solutions toward the constant steady state. Our results reveal that sufficiently strong degenerate diffusion ensures global boundedness and exponential stabilization in the whole space.
\end{abstract} 

\begin{keyword}
Two-species chemotaxis \sep Global existence \sep Steady state \sep Long-time behavior 
\end{keyword}
\end{frontmatter} 

\section{Introduction}

Chemotaxis refers to the mechanism by which cells move in response to external signals, playing a fundamental role in various biological processes such as morphogenesis, wound healing, tumor growth, and cell differentiation. To describe the chemotactic phenomenon, the most classical model for the collective motion of cells was developed by Patlak \cite{P53} and Keller-Segel \cite{KS70} and is written as
\begin{align}
\left\{
  \begin{array}{ll}
  u_t=\Delta u-\chi \nabla \cdot (u \nabla c)+f(u), & x \in \Omega, t>0, \\
\varepsilon c_t=\Delta c-c+u, & x \in \Omega, t>0,
  \end{array}
\right.
\end{align}
where $\Omega$ is assumed to be a bounded domain or the whole space. This system consists of two equations that describe the evolution of the density of bacteria $u(x,t)$ and the chemoattractant concentration $c(x,t)$. The chemical attractant is emitted by the cells themselves, and it also diffuses in space and decays in time. The logistic-type source $f(u)$ describes the proliferation and density-dependent saturation of bacteria. This system has been widely studied, and a comprehensive survey of the mathematical results was given by Horstmann \cite{HP09,Horst1,Horst2}. One of the most famous research results concerns the so-called chemotactic collapse \cite{BC99,CP06,bp07,bdp06,dp04,JL92,HV96}.

For two-species communities, such dynamics are particularly pronounced: each species may secrete specific signaling molecules, while simultaneously exhibiting directed movement toward the signals produced by themselves or by the other species. This coupling of interspecific interactions and chemotactic response forms the core of two-species chemotaxis models. In the past two decades, two-species chemotaxis models have attracted much interest. When two species secrete the same chemical signal, the system reads
\begin{align}\label{07061}
\left\{
  \begin{array}{ll}
    u_t=d_1 \Delta u-\chi_1 \nabla \cdot (u \nabla c)+\mu_1 u(1-u-a_1 v), & x \in \Omega, t>0, \\
    v_t=d_2 \Delta v-\chi_2 \nabla \cdot (v \nabla c)+\mu_2 v(1-v-a_2 u), & x \in \Omega, t>0, \\
    \varepsilon c_t=\Delta c-c+\alpha u+\beta v, & x \in \Omega, t>0, 
  \end{array}
\right.
\end{align} 
where $u(x,t)$ and $v(x,t)$ represent the densities of two distinct types of cells, and $c$ is generated by both $u$ and $v$. The global well-posedness and asymptotically stable constant steady state of \eqref{07061} were established by \cite{M18,ST14,TW12} for the parabolic-elliptic system ($\varepsilon=0$) and by \cite{B16,WMH18,WZ15} for the parabolic-parabolic system ($\varepsilon>0$). When the two species respond to chemicals emitted by each other rather than to their own signals, the system takes the form
\begin{align}\label{07062}
\left\{
  \begin{array}{ll}
    u_t=d_1 \Delta u-\chi_1 \nabla \cdot (u \nabla c)+\mu_1 u(1-u-a_1 v), & x \in \Omega, t>0, \\
   \tau c_t=\Delta c-c+v, & x \in \Omega, t>0, \\
    v_t=d_2 \Delta v-\chi_2 \nabla \cdot (v \nabla z)+\mu_2 v(1-v-a_2 u), & x \in \Omega, t>0, \\
     \tau z_t=\Delta z-z+u, & x \in \Omega, t>0, 
  \end{array}
\right.
\end{align} 
where $c(x,t)$ and $z(x,t)$ describe the concentrations of chemical attractants emitted by $v(x,t)$ and $u(x,t)$, respectively. For $\tau=0,$ when $\chi_1,\chi_2$ are below certain thresholds, the system is globally well-posed and converges towards the constant steady state of \eqref{07062} \cite{CN18,ZL17,WM20}. Moreover, \cite{WZ20,TW15} studied the system when $c$ and $z$ depend on $u$ and $v$ in a nonlinear way. If $\tau>0,$ the global well-posedness and finite-time blow-up of the corresponding parabolic-parabolic system were established by \cite{LX20,PW20,MT25,LTW20}. 

In this work, we consider the two-species chemotaxis model with degenerate diffusion on the whole space $\R^d$
\begin{align}\label{uvsystem}
\left\{
  \begin{array}{ll}
    u_t=\Delta u^{m_1}-\nabla \cdot (u^{\sigma_1} \nabla c)+ u(N_1-u), & x \in \R^d, t > 0, \\
    -\Delta c+c=a_{11}u+a_{12} v,  & x \in \R^d, t > 0, \\
    v_t=\Delta v^{m_2}-\nabla \cdot (v^{\sigma_2} \nabla z)+ v(N_2-v), & x \in \R^d, t > 0, \\ 
    -\Delta z+z=a_{21}u+a_{22}v, & x \in \R^d, t > 0, \\
    (u,v)(x,0)=(u_0,v_0)(x) \ge 0, & x \in \R^d,   
  \end{array}
\right.
\end{align}
where $m_1>1,m_2>1,a_{ij}>0, i,j=1,2$. Compared to the comprehensive study of the linear diffusion case ($m_1=m_2=1$), the degenerate diffusion system is not well-understood, and only a few results are devoted to global bounded solutions in bounded domains ($\sigma_1=\sigma_2=1$), see \cite{LL22,Z21,Z17}. The goal of this paper is to investigate the influence of the nonlinear diffusion and aggregation exponents $m_i,\sigma_i (i=1,2)$ on the solutions, in particular, whether they affect the global well-posedness and long-time behavior of solutions. Initial data will be assumed throughout this paper to satisfy
\begin{align}\label{initialdata}
(u_0,v_0) \in L^1 \cap L^\infty(\R^d).
\end{align}

Before proceeding, we first state the definition of the weak solution $(u,v)$ considered in this paper.
\begin{definition}
Let $(u_0,v_0)$ be the initial data satisfying \eqref{initialdata} and $T \in (0,\infty].$ A pair of weak solutions $(u,v)$ to \eqref{uvsystem} is a pair of non-negative functions $(u,v) \in L^\infty \left(0,T;L_+^1 \cap L^\infty(\R^d) \right)$ such that for any $0<t<T$ and all test functions $\psi_1, \psi_2 \in C_0^\infty(\R^d)$, 
 \begin{align*}
 &\int_{\R^d} \psi_1 u(\cdot,t)dx-\int_{\R^d} \psi_1 u_0(x) dx =\int_0^t
 \int_{\R^d} u^{m_1} \Delta \psi_1 dx ds \nonumber \\
 & + \int_0^t \int_{\R^d} u^{\sigma_1} \nabla c \cdot \nabla \psi_1 dx ds+ \int_0^t \int_{\R^d} \psi_1 u(N_1-u) dxds, \nonumber \\
 &\int_{\R^d} \psi_2 v(\cdot,t)dx-\int_{\R^d} \psi_2 v_0(x) dx =\int_0^t
 \int_{\R^d} v^{m_2} \Delta \psi_2 dx ds \nonumber \\
 & + \int_0^t \int_{\R^d} v^{\sigma_2} \nabla z \cdot \nabla \psi_2 dx ds+ \int_0^t \int_{\R^d} \psi_2 v(N_2-v) dx ds.
 \end{align*}
\end{definition}

Our first result concerns the global existence of solutions to system \eqref{uvsystem} in dimension $d \ge 3$.
\begin{theorem}\label{th1}
Let $d \ge 3, m_1>1,m_2>1, \sigma_1>1, \sigma_2>1$. Assume that
\begin{align}\label{m1m2}
\min\{m_1,m_2\}>1+\max\{\sigma_1,\sigma_2 \}-2/d.
\end{align}
Then for any initial data satisfying \eqref{initialdata}, system \eqref{uvsystem} possesses a global weak solution $(u,v)$ that is uniformly bounded for all times.   
\end{theorem}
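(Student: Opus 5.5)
The approach is to regularize, derive a priori $L^p$ bounds uniform in time and in the regularization, upgrade them to $L^\infty$ bounds, and then pass to the limit by compactness. Concretely, replace $\Delta u^{m_1}$ by $\Delta u^{m_1}+\varepsilon\Delta u$ (and likewise for $v$) and mollify the initial data. This gives a nondegenerate parabolic–elliptic system with classical local solutions, and these can be continued as long as $\|u\|_{L^\infty}+\|v\|_{L^\infty}$ stays bounded. For the $L^1$ bound, integrating the equations gives $\frac{d}{dt}\|u\|_{L^1}\le N_1\|u\|_{L^1}-\|u\|_{L^2}^2$. This controls the mass for finite times, and combined with interpolation it controls the mass uniformly in time, because the logistic term damps large mass. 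The elliptic equations give $c=(I-\Delta)^{-1}(a_{11}u+a_{12}v)$, so $-\Delta c\le a_{11}u+a_{12}v$, and similarly for $z$.

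The core step is an $L^p$ energy estimate for large $p$. Multiply the $u$-equation by $pu^{p-1}$ and integrate by parts in the chemotactic term:
\begin{align*}
\int \nabla(u^{p-1})\cdot u^{\sigma_1}\nabla c\,dx=\frac{p-1}{p+\sigma_1-1}\int \nabla (u^{p+\sigma_1-1})\cdot\nabla c\,dx\le C_p\int u^{p+\sigma_1-1}(a_{11}u+a_{12}v)\,dx.
\end{align*}
Diffusion produces the dissipation $\frac{4m_1p(p-1)}{(m_1+p-1)^2}\|\nabla u^{(m_1+p-1)/2}\|_2^2$. Adding the analogous identity for $v$, the right-hand side contains $\int u^{p+\sigma_1}+u^{p+\sigma_1-1}v+v^{p+\sigma_2}+v^{p+\sigma_2-1}u$. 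Young's inequality reduces the mixed terms to $\int u^{p+\sigma}+v^{p+\sigma}$ with $\sigma=\max\{\sigma_1,\sigma_2\}$, plus lower powers that are absorbed by interpolation with $L^1$ and the logistic damping $-p\int u^{p+1}$.

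The main obstacle is to absorb $\int u^{p+\sigma}$ into the dissipation uniformly in time. Let $w=u^{(m_1+p-1)/2}$, so that $\int u^{p+\sigma}=\|w\|_{q}^{q}$ with $q=2(p+\sigma)/(m_1+p-1)$. Apply the Gagliardo–Nirenberg inequality between $\|\nabla w\|_2$ and $\|w\|_{r}$, where $r=2/(m_1+p-1)$ so that $\|w\|_r^r=\|u\|_{L^1}$. The resulting power of $\|\nabla w\|_2$ is $q\theta<2$ exactly when $p+\sigma-1<m_1+p-1-1+2/d$ up to the $L^1$ normalization, i.e. when $m_1>1+\sigma-2/d$; this is precisely condition \eqref{m1m2}, used with $\min m_i$ and $\max\sigma_i$. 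Young's inequality then gives
\begin{align*}
\int u^{p+\sigma}\le \delta\|\nabla u^{(m_1+p-1)/2}\|_2^2+C(\delta,\|u\|_{L^1}),
\end{align*}
and the same holds for $v$. I would carry out the exponent bookkeeping carefully at this point, since $\theta$ must stay in $(0,1)$ for every large $p$. To obtain a bound that does not grow in time, add $\int u^p+v^p$ to both sides and bound it again by GN plus the $L^1$ norm (or use $-\int u^{p+1}$). This yields the differential inequality $y'+y\le C_p$ for $y=\int u^p+v^p$, hence $\sup_t\|u\|_p+\|v\|_p\le C_p$ independently of $\varepsilon$.

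For the $L^\infty$ bound, the $L^p$ bounds for large $p$ give $\nabla c,\nabla z\in L^\infty$ uniformly via elliptic regularity. A Moser–Alikakos iteration with $p_k=2^k$ then gives a uniform-in-time $L^\infty$ bound, and the dissipation constant $\sim 1/p$ only produces geometric growth, which the iteration handles. Finally, compactness completes the argument. The estimate $\nabla u^{(m_1+p-1)/2}\in L^2_{t,x}$, a time-derivative bound in a negative Sobolev space, and Aubin–Lions locally in space give strong $L^1_{loc}$ convergence of $u_\varepsilon$ and $v_\varepsilon$. The $L^\infty$ bound lets us pass to the limit in $u^{m_1}$, $u^{\sigma_1}\nabla c$ and the logistic term, which yields a global weak solution that inherits the uniform bound.
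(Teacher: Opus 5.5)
\paragraph{Finite-time part.} Your finite-time argument is sound, and it differs from the paper only in the absorption step.

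\begin{itemize}
\item Interpolating $w=u^{(m_1+p-1)/2}$ directly against $\|w\|_r^r=\|u\|_1$ with $r=2/(m_1+p-1)<1$ is legitimate for $d\ge 3$: use H\"older between $L^r$ and $L^{2^*}$, then Sobolev.
\item One checks that $q\theta=\frac{2d(p+\sigma-1)}{d(m_1+p-1)-(d-2)}<2$ if and only if $m_1>1+\sigma-2/d$, for every $p$.
\item The paper instead applies Lemma \ref{GNS1} with an $L^{k'}$ norm, $k'>1$. It then interpolates $L^{k'}$ between $L^1$ and $L^{k+1}$ and absorbs into the logistic term $k\int u^{k+1}$, which costs extra constraints on $k$.
\end{itemize}

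Both routes give the same threshold. Combined with $\|u\|_1\le \|u_0\|_1e^{N_1t}$, your route yields $\varepsilon$-independent $L^\infty$ bounds on every $[0,T]$, and hence a global weak solution.

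\paragraph{The gap: the time-uniform bound.} You assert that the logistic term, combined with interpolation, controls the mass uniformly in time. This fails on $\R^d$. That damping mechanism rests on $\|u\|_1^2\le|\Omega|\,\|u\|_2^2$, which requires a domain of finite measure.

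The only input you use is $\frac{d}{dt}\|u\|_1=N_1\|u\|_1-\|u\|_2^2$. This identity holds equally for the chemotaxis-free equation $u_t=\Delta u^{m_1}+u(N_1-u)$. Solutions of that equation from nontrivial compactly supported data spread and converge to $N_1$ on expanding regions. Hence $\|u(t)\|_1\to\infty$, and $\int u^p\to\infty$ for every finite $p$.

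Consequently your constant $C(\delta,\|u\|_1)$, the inequality $y'+y\le C_p$, and the Moser iteration built on them are uniform only on bounded time intervals. No argument run through global $L^p$ norms can produce $\sup_{t>0}\|u(t)\|_\infty<\infty$.

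The paper's proof does not supply the missing step either:
\begin{itemize}
\item it only has the exponentially growing mass bound \eqref{um1vm1};
\item its estimate \eqref{66} depends on $\|u\|_1$ and $\|v\|_1$;
\item the time-uniform claims \eqref{88} and \eqref{07101} are asserted without any uniform mass bound.
\end{itemize}

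To obtain uniform boundedness you need a localized argument. For instance, you could estimate uniformly local norms $\sup_{x_0\in\R^d}\int_{B_1(x_0)}u^p\,dx$ using cutoffs. On a ball the logistic term does provide damping, since $\|u\|_{L^1(B)}^2\le|B|\,\|u\|_{L^2(B)}^2$. The gradients $\|\nabla c\|_\infty$ and $\|\nabla z\|_\infty$ can then be controlled through the exponential decay of the Bessel kernel of $(I-\Delta)^{-1}$.
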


The proof of Theorem \ref{th1}, given in Section \ref{global}, is based on an energy estimate in which the key observation is that the mutually repulsive effect of the death and diffusion terms dominates the attractive effect of the aggregation term.

Our next result concerns the long-time behavior of solutions.
\begin{theorem}\label{th2}
Let $d \ge 3, m_1>1,m_2>1, \sigma_1=\frac{m_1+1}{2}, \sigma_2=\frac{m_2+1}{2}$. Under assumptions \eqref{initialdata} and 
\begin{align}
& m_1>\frac{m_2-1}{2}+2-\frac{2}{d} \\
\text{or} \quad & m_2>\frac{m_1-1}{2}+2-\frac{2}{d},
\end{align} 
suppose further that the positive numbers $a_{ij}$ satisfy 
\begin{align}
\left\{
  \begin{array}{ll}
    (a_{11}^2+a_{12}^2)N_1<16 m_1, \\[1mm]
    \frac{(a_{11}^2+a_{12}^2)N_1}{m_1}+\frac{(a_{21}^2+a_{22}^2)N_2}{m_2}<16+\frac{(a_{11}a_{22}-a_{12}a_{21})^2 N_1N_2}{16m_1m_2}.
  \end{array}
\right.
\end{align}
Then the global bounded solution $(u,v)$ of \eqref{uvsystem} satisfies that for some $\lambda>0$ and $C>0$,
\begin{align}
\|u(\cdot,t)-N_1\|_{L^2(\R^d)}+\|v(\cdot,t)-N_2\|_{L^2(\R^d)} \le C e^{-\lambda t},\quad \text{for all } t>0.
\end{align}
\end{theorem}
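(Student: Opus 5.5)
The plan is a Lyapunov-functional argument built on relative entropies. For each species I set
\begin{align*}
E(t)=\int_{\R^d}\Big(u-N_1-N_1\ln\frac{u}{N_1}\Big)dx+\int_{\R^d}\Big(v-N_2-N_2\ln\frac{v}{N_2}\Big)dx .
\end{align*}
First I check that Theorem \ref{th1} applies when $\sigma_i=\frac{m_i+1}{2}$. The hypotheses on $m_1,m_2$ are designed to feed into \eqref{m1m2}, although the ``or'' form will have to be reconciled with the $\min/\max$ form there. Once it applies, $(u,v)$ is global and $\|u\|_{L^\infty}+\|v\|_{L^\infty}\le K$ uniformly in time. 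All computations below are done on the approximate (regularized, positive) problem used in Section \ref{global} and then passed to the limit.

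Next I differentiate $E$. For the $u$-part, with $U=u-N_1$, $\tilde c=c-(a_{11}N_1+a_{12}N_2)$ and similarly $V$, $\tilde z$, I get
\begin{align*}
\frac{d}{dt}\int\Big(u-N_1-N_1\ln\frac{u}{N_1}\Big)=-m_1N_1\int u^{m_1-3}|\nabla u|^2+N_1\int u^{\frac{m_1-3}{2}}\nabla u\cdot\nabla\tilde c-\int U^2 .
\end{align*}
The choice $\sigma_1=\frac{m_1+1}{2}$ is exactly what makes the cross term match the diffusion term under Young's inequality. This gives
\begin{align*}
N_1\int u^{\frac{m_1-3}{2}}\nabla u\cdot\nabla\tilde c\le m_1N_1\int u^{m_1-3}|\nabla u|^2+\frac{N_1}{4m_1}\int|\nabla\tilde c|^2 .
\end{align*}
For the elliptic part, $-\Delta\tilde c+\tilde c=a_{11}U+a_{12}V$. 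Testing with $\tilde c$ gives $\int|\nabla\tilde c|^2+\int\tilde c^2=\int\tilde c(a_{11}U+a_{12}V)$, and hence $\int|\nabla\tilde c|^2\le\frac14\int(a_{11}U+a_{12}V)^2$. Doing the same for $v$ and $\tilde z$ yields
\begin{align*}
\frac{dE}{dt}\le-\int Q(U,V),\qquad Q(U,V)=U^2+V^2-\frac{N_1}{16m_1}(a_{11}U+a_{12}V)^2-\frac{N_2}{16m_2}(a_{21}U+a_{22}V)^2 .
\end{align*}

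The two algebraic conditions on $a_{ij}$ should be exactly positive-definiteness of $Q$. The first condition makes the $U^2$ coefficient positive. The second is the determinant condition: expanding $\det$ produces $1-\frac{(a_{11}^2+a_{12}^2)N_1}{16m_1}-\frac{(a_{21}^2+a_{22}^2)N_2}{16m_2}+\frac{(a_{11}a_{22}-a_{12}a_{21})^2N_1N_2}{256m_1m_2}>0$, which, multiplied by $16$, is the stated inequality. I will verify this expansion, possibly allowing a little slack in Young's inequality. It then follows that $\frac{dE}{dt}\le-\delta\int(U^2+V^2)$ for some $\delta>0$.

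To close the argument, I use the uniform bound $u,v\le K$. Since $s\mapsto s-N-N\ln(s/N)$ has second derivative $N/s^2\ge N/K^2$ on $(0,K]$, I get $E\ge c\,(\|U\|_2^2+\|V\|_2^2)$. Exponential decay therefore reduces to showing $E\le C(\|U\|_2^2+\|V\|_2^2)$, after which Gronwall gives $E(t)\le E(0)e^{-\lambda t}$ and the claim follows.

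I expect this reverse inequality to be the main obstacle. The entropy integrand behaves like $-N\ln u$ near $u=0$, so it is not controlled by $(u-N)^2$ there. There is also the whole-space issue that $u-N_1\notin L^2$ when $u\in L^1$, so $E(0)$ and the norms must be interpreted appropriately; I would need a positive lower bound or some extra structure on the data. If that fails, I would first redo the computation with the quadratic functional $\frac12\int(U^2+V^2)$ instead. In that version the cross term $\int u^{\frac{m_1+1}{2}}\nabla u\cdot\nabla\tilde c$ is absorbed by $\frac{4m_1}{(m_1+1)^2}\int|\nabla u^{\frac{m_1+1}{2}}|^2$. The logistic term $-\int u\,U^2$ degenerates near $u=0$, and this is where the uniform bound and the weighted constant $N_1$ would have to be recovered. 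Failing that, I would combine the two functionals to obtain comparability with $\|U\|_2^2+\|V\|_2^2$.
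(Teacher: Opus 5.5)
The gap is the final step, which you leave open. Beyond that, your whole-space objection is correct and makes the statement unprovable as written.

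\textbf{The energy inequality.} This is the paper's Lemma 4.1 in a leaner form. The paper keeps $c-c_*$ and $z-z_*$ as unknowns and applies Sylvester's criterion to a $4\times 4$ matrix $P$. Your bound $\int|\nabla\tilde c|^2\le\frac14\int(a_{11}U+a_{12}V)^2$ is exactly the Schur complement of $P$, so the two sets of conditions coincide. There is one slip. The first condition does not give positivity of the $U^2$-coefficient, which is $1-\frac{N_1a_{11}^2}{16m_1}-\frac{N_2a_{21}^2}{16m_2}$. What it says is $\frac{N_1}{16m_1}|w_1|^2<1$ with $w_1=(a_{11},a_{12})$. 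Testing on a unit vector orthogonal to $w_2=(a_{21},a_{22})$ then shows the smaller eigenvalue of $\frac{N_1}{16m_1}w_1w_1^T+\frac{N_2}{16m_2}w_2w_2^T$ is below $1$. Combined with $\det Q>0$, this forces both eigenvalues of $Q$ to be positive.

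\textbf{The missing step and the paper's device.} You do not close the reverse inequality $E\le C(\|U\|_2^2+\|V\|_2^2)$. The paper does not attempt a global reverse inequality. Instead, with $f=\|U\|_2^2+\|V\|_2^2$:
\begin{itemize}
\item From $E\ge 0$ and $E'\le-\delta f$ it gets $\int_1^\infty f\,dt<\infty$.
\item Together with a claimed bound $\int_0^\infty|f'|\,dt<\infty$, this yields $f(t)\to 0$.
\item Invoking $g(s)\sim\frac{(s-N)^2}{2N}$ as $s\to N$, it asserts $\frac{1}{3N_1}\int U^2\le A(t)\le\frac{1}{N_1}\int U^2$ for $t\ge t_0$.
\item It then applies Gronwall on $[t_0,\infty)$.
\end{itemize}

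\textbf{Why the device does not answer your objection.} $\|U\|_2\to0$ does not keep $u$ away from $0$, and $g(s)/(s-N_1)^2\to\infty$ as $s\to 0$. The upper comparison therefore needs $\|u(t)-N_1\|_{L^\infty}\to 0$, for instance by interpolating with a uniform H\"older bound. The paper does not prove this.

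\textbf{Why the statement itself fails on $\R^d$.} Since $u(t)\in L^1(\R^d)$, Chebyshev gives $|\{u\ge N_1/2\}|\le 2\|u(t)\|_1/N_1<\infty$. On the complementary set, which has infinite measure, $(u-N_1)^2\ge N_1^2/4$. Hence $\|u(t)-N_1\|_{L^2(\R^d)}=+\infty$ for every $t$. Likewise $A(t)$, $B(t)$ and $\int(c-c_*)^2$ are all infinite, since $g(s)\ge g(N/2)>0$ for $s<N/2$ and $c\in L^1$. So neither your route nor the paper's can produce the stated estimate. It needs a different setting: a bounded domain, or data with $u_0-N_1\in L^2$ and $u_0$ bounded below by a positive constant, rather than $u_0\in L^1$.

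\textbf{The ``or'' in the hypothesis.} Your concern is justified. With ``or'' the hypothesis does not imply \eqref{m1m2}; take $d=3$, $m_1=1.1$, $m_2=10$. Theorem \ref{th1} is available only when the inequality holds with the smaller $m_i$ on the left, which is equivalent to both inequalities holding.
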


The remainder of this paper is organized as follows. Section \ref{Pre} establishes an existence criterion that characterizes the maximal existence time of solutions to \eqref{uvsystem}. Section \ref{global} then proves the global existence of solutions, thereby completing the proof of Theorem \ref{th1}. Finally, Section \ref{longtime} is devoted to the long-time behavior of the global solution and provides the proof of Theorem \ref{th2}.

In what follows, we denote by $C$ a generic constant (which may vary between lines) and by $C=C(\cdot,\cdots,\cdot)$ a constant depending only on the quantities appearing in parentheses. We also use the simplified notations $\|\cdot\|_{r}:=\|\cdot\|_{L^r(\R^d)}, 1 \le r <\infty.$

\section{Preliminaries}\label{Pre}

In this section, we first prepare the following lemma, which will play an important role in the proof of the global existence of solutions to system \eqref{uvsystem}.
\begin{lemma}[\cite{B16}]\label{GNS1}
Let $p=\frac{2d}{d-2}, 1 \le r<q<p$ and $\frac{q}{r}<\frac{2}{r}+1-\frac{2}{p}.$ Then for any $w \in L^r(\R^d) \cap H^1(\R^d)$, it holds that
\begin{align}
\|w\|_{L^q(\R^d)}^q \le C(d) C_0^{-\frac{\lambda q}{2-\lambda q}} \|w\|_{L^r(\R^d)}^\gamma +C_0 \|\nabla w\|_{L^2(\R^d)}^2.
\end{align}
Here $C(d)$ is a constant depending on $d,$ $C_0$ is an arbitrary positive constant, and 
\begin{align}
\lambda=\frac{\frac{1}{r}-\frac{1}{q}}{\frac{1}{r}-\frac{1}{p}} \in (0,1),\quad \gamma=\frac{2(1-\lambda)q}{2-\lambda q}=\frac{2 \left( 1-\frac{q}{p} \right)}{\frac{2-q}{r}+1-\frac{2}{p}}.
\end{align}
\end{lemma}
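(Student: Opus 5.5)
The plan is to combine the Gagliardo--Nirenberg--Sobolev inequality with Young's inequality. Set $p=\frac{2d}{d-2}$ and start from the interpolation (H\"older) inequality between $L^r$ and $L^p$:
\begin{align*}
\|w\|_{q}\le \|w\|_{r}^{1-\lambda}\|w\|_{p}^{\lambda},\qquad \frac1q=\frac{1-\lambda}{r}+\frac{\lambda}{p}.
\end{align*}
Solving the exponent relation gives exactly the stated $\lambda=\frac{1/r-1/q}{1/r-1/p}$. Since $r<q<p$, we have $\lambda\in(0,1)$.

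Next, use the Sobolev inequality in $\R^d$, $d\ge3$: $\|w\|_p\le S(d)\|\nabla w\|_2$, valid for $w\in H^1(\R^d)$. This yields
\begin{align*}
\|w\|_q^q\le S(d)^{\lambda q}\,\|w\|_r^{(1-\lambda)q}\,\|\nabla w\|_2^{\lambda q}.
\end{align*}

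The key point is to check $\lambda q<2$, which is where the hypothesis $\frac qr<\frac2r+1-\frac2p$ enters. One computes
\begin{align*}
\lambda q=\frac{q/r-1}{1/r-1/p},
\end{align*}
so $\lambda q<2$ is equivalent to $\frac qr-1<\frac2r-\frac2p$, i.e.\ $\frac qr<\frac2r+1-\frac2p$. This is precisely the assumption.

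We then apply Young's inequality with exponents $\frac{2}{\lambda q}$ and $\frac{2}{2-\lambda q}$ to the product $\bigl(\epsilon\|\nabla w\|_2^{\lambda q}\bigr)\bigl(\epsilon^{-1}S^{\lambda q}\|w\|_r^{(1-\lambda)q}\bigr)$. Choose $\epsilon$ so that the gradient term equals $C_0\|\nabla w\|_2^2$; this requires $\epsilon^{2/(\lambda q)}\sim C_0$. The other term then carries the factor $\epsilon^{-2/(2-\lambda q)}\sim C_0^{-\frac{\lambda q}{2-\lambda q}}$, multiplied by $\|w\|_r^{\gamma}$ with
\begin{align*}
\gamma=\frac{2(1-\lambda)q}{2-\lambda q}.
\end{align*}

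Finally, verify the closed form of $\gamma$. Compute
\begin{align*}
(1-\lambda)q=\frac{q(1/q-1/p)}{1/r-1/p}=\frac{1-q/p}{1/r-1/p},\qquad 2-\lambda q=\frac{\frac{2-q}{r}+1-\frac2p}{1/r-1/p}.
\end{align*}
Taking the ratio gives the stated closed form of $\gamma$. All constants depend only on $d$ (through $S(d)$ and the Young constants, which depend on $\lambda q$), consistent with $C(d)$.

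The only delicate point is the exponent bookkeeping in the condition $\lambda q<2$, which is essential for Young's inequality to apply.
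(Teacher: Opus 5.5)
The paper does not prove this lemma itself (it is imported from \cite{B16}). Your argument is the standard derivation and is correct: H\"older interpolation between $L^r$ and $L^p$, the Sobolev inequality $\|w\|_p\le S(d)\|\nabla w\|_2$, and Young's inequality with exponents $\frac{2}{\lambda q}$ and $\frac{2}{2-\lambda q}$, where the hypothesis $\frac qr<\frac2r+1-\frac2p$ is exactly $\lambda q<2$. Your bookkeeping for the factor $C_0^{-\frac{\lambda q}{2-\lambda q}}$ and for both expressions of $\gamma$ is also right.

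The one imprecision is your closing remark on the constant. The resulting constant is $(1-\theta)\,\theta^{\frac{\theta}{1-\theta}}\,S(d)^{\frac{2\theta}{1-\theta}}$ with $\theta=\frac{\lambda q}{2}$, so it does depend on $q$ and $r$. It is controlled by $d$ alone only because the sharp Sobolev constant satisfies $S(d)<1$ for $d\ge3$, which makes the constant at most $1$. Otherwise, simply record the dependence on $q,r$; this is harmless for how the paper uses the lemma, since there the constants are written $C(k)$ anyway.
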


We now consider the regularized problem
\begin{align}\label{kseps}
\left\{
  \begin{array}{ll}
\partial_t u_\varepsilon=\Delta u^{m_1}_\varepsilon+\varepsilon \Delta u_\varepsilon-\nabla \cdot \left( u_\varepsilon^{\sigma_1} \nabla c_\varepsilon \right)+ u_\varepsilon(N_1-u_\varepsilon), & x \in \R^d, t>0, \\
    -\Delta c_\varepsilon+c_\varepsilon=a_{11}u_\varepsilon+a_{12} v_\varepsilon,  & x \in \R^d, t >0, \\
    \partial_t v_\varepsilon=\Delta v^{m_2}_\varepsilon+\varepsilon \Delta v_\varepsilon-\nabla \cdot \left( v_\varepsilon^{\sigma_2} \nabla z_\varepsilon \right)+ v_\varepsilon(N_2-v_\varepsilon), & x \in \R^d, t>0, \\
    -\Delta z_\varepsilon+z_\varepsilon=a_{21}u_\varepsilon+a_{22}v_\varepsilon, & x \in \R^d, t> 0, \\
    (u_\varepsilon,v_\varepsilon)(x,0)=(u_{0\varepsilon},v_{0\varepsilon})(x) \ge 0, & x \in \R^d.
  \end{array}
\right.
\end{align}
Here, $(u_{0\varepsilon},u_{0\varepsilon})$ is a sequence of approximations for $(u_0,v_0)$ that can be constructed to satisfy the following: there exists $\varepsilon_0>0$ such that for any $0<\varepsilon<\varepsilon_0,$
\begin{align}
\left\{
  \begin{array}{ll}
(u_{0\varepsilon},v_{0\varepsilon}) \ge 0, ~~ \|u_\varepsilon(x,0)\|_{1}=\|u_0\|_{1},~~ \|v_\varepsilon(x,0)\|_{1}=\|v_0\|_{1}, \\
 (u_{0\varepsilon},v_{0\varepsilon})(x) \to (u_0,v_0)(x) \text{ in } L^q(\R^d),\text{ for }1 \le q <\infty,\text{ as } \varepsilon \to 0.
  \end{array}
\right.
\end{align}
The regularized problem admits global in time smooth solutions for any $\varepsilon>0$. This approximation has been proved to be convergent. More precisely, following the arguments in \cite[Theorem 4.2]{BL14} and \cite[Section 4]{suku06}, we assert that if
\begin{align}\label{Linfinity}
\|u_\varepsilon(\cdot,t)\|_{L^\infty(\R^d)}+\|v_\varepsilon(\cdot,t)\|_{L^\infty(\R^d)}<C_0,
\end{align}
where $C_0$ is independent of $\varepsilon,$ then there exists a subsequence $\varepsilon_n \to 0$ such that
\begin{align}
  \begin{array}{ll}
    (u_{\varepsilon_n},v_{\varepsilon_n}) \rightarrow (u,v) \text{ in } L^r(0,T;L^r(\R^d)),\quad 1 \le r<\infty \label{conver1}
  \end{array}
\end{align}
and $(u,v)$ is a weak solution to \eqref{uvsystem} on $[0,T)$.

According to the above analysis, a weak solution to \eqref{uvsystem} on $[0,T)$ exists when \eqref{Linfinity} is fulfilled. Hence, we shall focus on establishing the availability of the $L^\infty$-bound. As we will observe in the following lemma, where the local-in-time existence and blow-up criteria are constructed, such a bound follows from the $L^r$ norm for $r>1$, which additionally provides a characterisation of the maximal existence time.

\begin{lemma}[Local existence and blow-up criteria]\label{ueps}
Under assumption \eqref{initialdata} on the initial condition, there exist a maximal existence time $T_w \in (0,\infty]$ and a weak solution $(u,v)$ to \eqref{uvsystem} on $[0,T_w)$. If $T_w<\infty$, then
\begin{align}\label{criterion}
\|u(\cdot,t)\|_{L^\infty(\R^d)}+ \|v(\cdot,t)\|_{L^\infty(\R^d)} \to \infty \text{ as } t \to T_w.
\end{align}
\end{lemma}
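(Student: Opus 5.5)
We prove Lemma \ref{ueps} with the regularized problem \eqref{kseps} and the convergence statement following \eqref{Linfinity}. The plan has three steps. First, build a local solution with an $\varepsilon$-independent $L^\infty$ bound on a short interval. Second, upgrade an $L^r$ bound to an $L^\infty$ bound by Moser--Alikakos iteration. Third, define $T_w$ and obtain the criterion \eqref{criterion} by contradiction.

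\textbf{Step 1: uniform local bound.} For $r>1$, multiply the $u_\varepsilon$ equation by $r u_\varepsilon^{r-1}$ and integrate. Since
\begin{align*}
-\Delta c_\varepsilon = a_{11}u_\varepsilon + a_{12}v_\varepsilon - c_\varepsilon \le a_{11}u_\varepsilon + a_{12}v_\varepsilon ,
\end{align*}
the chemotactic term satisfies
\begin{align*}
r(r-1)\int u_\varepsilon^{\sigma_1+r-2}\nabla u_\varepsilon\cdot\nabla c_\varepsilon
= -\frac{r(r-1)}{\sigma_1+r-1}\int u_\varepsilon^{\sigma_1+r-1}\Delta c_\varepsilon
\le C\int u_\varepsilon^{\sigma_1+r-1}(u_\varepsilon+v_\varepsilon).
\end{align*}
We drop the diffusion terms, which are nonpositive, and the logistic term, which contributes at most $rN_1\int u_\varepsilon^r$. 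With $y(t)=\|u_\varepsilon\|_\infty+\|v_\varepsilon\|_\infty$, this gives
\begin{align*}
\frac{d}{dt}\big(\|u_\varepsilon\|_r^r+\|v_\varepsilon\|_r^r\big) \le C r\,\big(y^{\sigma_1}+y^{\sigma_2}+N_1+N_2\big)\big(\|u_\varepsilon\|_r^r+\|v_\varepsilon\|_r^r\big).
\end{align*}
The factor $r$ cancels when we pass to $\frac{d}{dt}\log\|\cdot\|_r$. Letting $r\to\infty$, we obtain a comparison inequality
\begin{align*}
y(t)\le y(0)\exp\Big(C\int_0^t\big(1+y^{\sigma_1}+y^{\sigma_2}\big)\,ds\Big),
\end{align*}
which holds uniformly in $\varepsilon$. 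Alternatively, one can argue directly with the maximum principle at a maximum point. This inequality yields $T_0>0$, depending only on $\|u_0\|_\infty+\|v_0\|_\infty$, such that $y(t)\le 2(\|u_0\|_\infty+\|v_0\|_\infty)+1$ on $[0,T_0]$. By the convergence result stated after \eqref{Linfinity}, we get a weak solution on $[0,T_0)$. The $L^1$ norms stay controlled since $\frac{d}{dt}\|u_\varepsilon\|_1\le N_1\|u_\varepsilon\|_1$.

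\textbf{Step 2: from $L^r$ to $L^\infty$.} This step is needed to make the criterion intrinsic. Suppose $\sup_{t<T}(\|u_\varepsilon\|_r+\|v_\varepsilon\|_r)$ is bounded for some large $r$. We test with $u_\varepsilon^{p-1}$ for $p=2^k$ and keep the diffusion term $\int|\nabla u_\varepsilon^{(p+m_1-1)/2}|^2$. The aggregation term is controlled through elliptic regularity: $\|\nabla c_\varepsilon\|_\infty$ is bounded by $\|u_\varepsilon\|_{r}+\|v_\varepsilon\|_r$ for $r>d$, via Bessel potential estimates for $(I-\Delta)^{-1}$. We then apply Gagliardo--Nirenberg (Lemma \ref{GNS1}) and iterate in the Alikakos fashion. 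This yields $\|u_\varepsilon\|_\infty\le C(T)$ independent of $\varepsilon$. I expect this to be the main technical obstacle: the exponents $\sigma_i>1$ require absorbing $\int u^{p+\sigma_1-1}(u+v)$ into the degenerate dissipation with constants growing at most geometrically in $p$.

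\textbf{Step 3: maximal time and the criterion.} Define $T_w$ as the supremum of times $T$ for which \eqref{Linfinity} holds on $[0,T)$ with some $\varepsilon$-independent constant. Restarting Step 1 from any time at which the $L^\infty$ norm is $\le M$ extends the solution by a time $T_0(M)>0$ that depends only on $M$. Now suppose $T_w<\infty$ and \eqref{criterion} fails. Then there are times $t_n\to T_w$ with $\|u(t_n)\|_\infty+\|v(t_n)\|_\infty\le M$. Restarting at a $t_n$ with $T_w-t_n<T_0(M)$ extends the solution beyond $T_w$, a contradiction. To pass from a $\liminf$ to a genuine limit, we use the comparison inequality of Step 1 backwards in time. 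It shows that if the norm is $\le M$ at $t_n$, it stays bounded on $[t_n,t_n+T_0(M)]$. Hence boundedness along a sequence already contradicts maximality, and therefore $\|u\|_\infty+\|v\|_\infty\to\infty$ as $t\to T_w$.
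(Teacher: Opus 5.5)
Your Step 1 is correct, and it is a genuinely different and more elementary route than the paper's. The paper tests with $ku_\varepsilon^{k-1}$ and keeps the degenerate dissipation. It absorbs $\int u_\varepsilon^{k+\sigma_i}$ and $\int v_\varepsilon^{k+\sigma_i}$ into that dissipation through Lemma~\ref{GNS1}, which gives a superlinear ODE for $\|u_\varepsilon\|_k^k+\|v_\varepsilon\|_k^k$. This works only for $k>\frac d2(\max\sigma_i-\min m_i+1)$ and also yields the space--time gradient bound \eqref{07086}; Moser iteration then upgrades to $L^\infty$.

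You instead discard all diffusion and use only $c_\varepsilon\ge 0$, i.e.\ $-\Delta c_\varepsilon\le a_{11}u_\varepsilon+a_{12}v_\varepsilon$. This bounds the chemotactic term by $r(a_{11}+a_{12})y^{\sigma_1}\|u_\varepsilon\|_r^r$ with an $r$-independent constant, and letting $r\to\infty$ gives an $L^\infty$ comparison inequality directly. Your route needs neither Gagliardo--Nirenberg nor Moser iteration and uses nothing about $m_i$. It gives a local time $T_0$ depending only on $\|u_0\|_\infty+\|v_0\|_\infty$ (for mollified data with $\|u_{0\varepsilon}\|_\infty\le\|u_0\|_\infty$), which is exactly what a continuation argument needs. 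The paper's route buys an $L^k$ blow-up criterion, which is stronger than the stated one. It also serves as a dry run of the $L^k\to L^\infty$ machinery reused in Section~\ref{global}, where your comparison inequality, which itself blows up in finite time, can never give global control. For the lemma as stated, your Step 2 is unnecessary and is not the main obstacle; drop it.

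Step 3, however, does not close as you set it up. You define $T_w$ through the $\varepsilon$-uniform bound \eqref{Linfinity} on the approximating family, but negating \eqref{criterion} gives $\|u(t_n)\|_\infty+\|v(t_n)\|_\infty\le M$ only for the \emph{limit}. Since only $\|u(t_n)\|_\infty\le\liminf_{\varepsilon}\|u_\varepsilon(t_n)\|_\infty$ holds, this says nothing about $\sup_\varepsilon\|u_\varepsilon(t_n)\|_\infty$. So you cannot restart the comparison inequality for the same family at $t_n$. If instead you regularize afresh from $(u(t_n),v(t_n))$, the concatenated solution does live beyond $T_w$, but it is not a limit of the original family, so it does not contradict your definition of $T_w$.

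Define maximality at the level of weak solutions instead. Let $T_w$ be the supremum of those $T$ for which some weak solution on $[0,T)$ is bounded in $L^1\cap L^\infty$ on every $[0,T']$, $T'<T$. Then mollify $(u(t_n),v(t_n))$, apply your Step 1 to the new regularized problem, pass to the limit via the statement after \eqref{Linfinity}, and concatenate; the weak formulation is additive in time. This produces such a solution on $[0,t_n+T_0(M))$ with $t_n+T_0(M)>T_w$, a contradiction.

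Your remark about using the comparison inequality ``backwards in time'' should be removed. That inequality is proved only for the smooth $(u_\varepsilon,v_\varepsilon)$, not for the weak limit. It is also not needed, since the negation of \eqref{criterion} is already $\liminf_{t\to T_w}(\|u\|_\infty+\|v\|_\infty)<\infty$, which is exactly what the restart contradicts. Since uniqueness of weak solutions is not available, producing one solution on all of $[0,T_w)$ also needs a word. The paper sidesteps this whole step by deferring to an external reference ([BJ09, Theorem 2.4]).
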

\begin{proof}
Integrating the first and third equations of \eqref{kseps} over $\R^d$ yields 
\begin{align}
\frac{d}{dt} \int_{\R^d} u_\varepsilon dx \le N_1 \int_{\R^d} u_\varepsilon dx, \quad \frac{d}{dt} \int_{\R^d} v_\varepsilon dx \le N_2 \int_{\R^d} v_\varepsilon dx.
\end{align}
Applying Gronwall's inequality, we obtain
\begin{align}
\int_{\R^d} u_\varepsilon dx \le \int_{\R^d} u_{0\varepsilon} dx ~e^{N_1 t}, \quad 0<t<\infty, \\
\int_{\R^d} v_\varepsilon dx \le \int_{\R^d} v_{0\varepsilon} dx ~e^{N_2 t}, \quad 0<t<\infty. 
\end{align}
We now turn to the $L^k$-estimates for $k>1$, which will be essential for the $L^\infty$-bound. 

{\it\textbf{Step 1}} ($L^k$-estimates, $k \in (1,\infty)$) \quad Multiplying the first equation of \eqref{kseps} by $ku_\varepsilon^{k-1}$ and the third equation of \eqref{kseps} by $kv_\varepsilon^{k-1}$, we obtain
\begin{align}\label{230127}
&\frac{d}{dt} \int_{\R^d} \left(u_\varepsilon^k+v_\varepsilon^k \right) dx +\frac{4k(k-1)m_1}{(k+m_1-1)^2} \int_{\R^d} \left| \nabla u_\varepsilon^{\frac{k+m_1-1}{2}} \right|^2 dx+ \frac{4k(k-1)m_2}{(k+m_2-1)^2} \int_{\R^d} \left| \nabla v_\varepsilon^{\frac{k+m_2-1}{2}} \right|^2 dx \nonumber \\
&+k\int_{\R^d} u_\varepsilon^{k+1} dx+\frac{k(k-1)}{k+\sigma_1-1}\int_{\R^d} u_\varepsilon^{k+\sigma_1-1} c dx+\varepsilon \frac{4(k-1)}{k}\int_{\R^d} \left| \nabla u_\varepsilon^{\frac{k}{2}} \right|^2 dx  \nonumber \\
&+k\int_{\R^d} v^{k+1} dx+
\frac{k(k-1)}{k+\sigma_2-1}\int_{\R^d} v_\varepsilon^{k+\sigma_2-1} z dx+\varepsilon \frac{4(k-1)}{k}\int_{\R^d} \left| \nabla v_\varepsilon^{\frac{k}{2}} \right|^2 dx \nonumber \\
= &N_1 k\int_{\R^d} u^k dx+\frac{k(k-1)}{k+\sigma_1-1} a_{11} \int_{\R^d} u_\varepsilon^{k+\sigma_1} dx+ \frac{k(k-1)}{k+\sigma_1-1} a_{12} \int_{\R^d} u_\varepsilon^{k+\sigma_1-1} v_\varepsilon dx \nonumber \\
&+N_2 k\int_{\R^d} v_\varepsilon^k dx+\frac{k(k-1)}{k+\sigma_2-1} a_{22} \int_{\R^d} v_\varepsilon^{k+\sigma_2} dx+ \frac{k(k-1)}{k+\sigma_2-1} a_{21} \int_{\R^d} v_\varepsilon^{k+\sigma_2-1} u_\varepsilon dx.
\end{align}
An application of Young's inequality yields
\begin{align}
\int_{\R^d} u_\varepsilon^{k+\sigma_1-1} v_\varepsilon dx \le C \int_{\R^d} u_\varepsilon^{k+\sigma_1}dx+C \int_{\R^d} v_\varepsilon^{k+\sigma_1}dx
\end{align}
and
\begin{align}
\int_{\R^d} v_\varepsilon^{k+\sigma_2-1} u_\varepsilon dx \le C \int_{\R^d} v_\varepsilon^{k+\sigma_2}dx+C \int_{\R^d} u_\varepsilon^{k+\sigma_2}dx.
\end{align}
Substituting the above two inequalities into \eqref{230127}, we obtain
\begin{align}\label{07070}
&\frac{d}{dt} \int_{\R^d} (u_\varepsilon^k+v_\varepsilon^k) dx +\frac{4k(k-1)m_1}{(k+m_1-1)^2} \int_{\R^d} \left| \nabla u_\varepsilon^{\frac{k+m_1-1}{2}} \right|^2 dx+ \frac{4k(k-1)m_2}{(k+m_2-1)^2} \int_{\R^d} \left| \nabla v_\varepsilon^{\frac{k+m_2-1}{2}} \right|^2 dx \nonumber \\
& +k\int_{\R^d} u_\varepsilon^{k+1} dx+k\int_{\R^d} v^{k+1} dx \nonumber\\
\le & N_1 k\int_{\R^d} u^k dx+N_2 k\int_{\R^d} v_\varepsilon^k dx+C \int_{\R^d} u_\varepsilon^{k+\sigma_1}dx+C \int_{\R^d} v_\varepsilon^{k+\sigma_1}dx \nonumber \\
& +C \int_{\R^d} v_\varepsilon^{k+\sigma_2}dx+C \int_{\R^d} u_\varepsilon^{k+\sigma_2}dx.
\end{align}
Letting
\begin{align}\label{07071}
w=u_\varepsilon^{\frac{k+m_1-1}{2}},\quad q=\frac{2(k+\sigma_1)}{k+m_1-1},\quad r=\frac{2k}{k+m_1-1}
\end{align}
in Lemma \ref{GNS1} with $k>\frac{2 \sigma_1-p(m_1-1)}{p-2}$ (such that $q<p$) and $\frac{p}{p-2}(\sigma_1-m_1+1)<k<k+\sigma_1$ (such that $\frac{q}{r}<\frac{2}{r}+1-\frac{2}{p}$), we have
\begin{align}\label{07072}
C \int_{\R^d} u_\varepsilon^{k+\sigma_1} dx \le \frac{k(k-1)m_1}{(k+m_1-1)^2} \left\| \nabla u_\varepsilon^{\frac{k+m_1-1}{2}} \right\|_2^2+C(k) \|u_\varepsilon\|_{k}^{b_1},
\end{align}
where
\begin{align}
b_1=\frac{(k+\sigma_1)(1-\lambda)}{1-\frac{\lambda q}{2}},\quad \lambda=\frac{\frac{k+m_1-1}{2k}-\frac{k+m_1-1}{2(k+\sigma_1)}}{\frac{k+m_1-1}{2k}-\frac{1}{p}}.
\end{align}
A direct computation yields
\begin{align}\label{07073}
b_1=k \frac{\frac{k+m_1-1}{2}-\frac{k+\sigma_1}{p}}{\frac{k+m_1-1}{2}-\frac{k}{p}-\frac{\sigma_1}{2}}>k.
\end{align}
Following arguments as in \eqref{07071}-\eqref{07073}, we also have
\begin{align}\label{07081}
C \int_{\R^d} u_\varepsilon^{k+\sigma_2} dx \le \frac{k(k-1)m_1}{(k+m_1-1)^2} \left\| \nabla u_\varepsilon^{\frac{k+m_1-1}{2}} \right\|_2^2+C(k) \|u_\varepsilon\|_{k}^{b_2},
\end{align} 
where $k>\frac{p}{p-2}(\sigma_2-m_1+1)$ and 
\begin{align}\label{07130}
b_2=k \frac{\frac{k+m_1-1}{2}-\frac{k+\sigma_2}{p}}{\frac{k+m_1-1}{2}-\frac{k}{p}-\frac{\sigma_2}{2}}>k.
\end{align}
Similarly,  applying the arguments from \eqref{07071}-\eqref{07130} to $v_\varepsilon$, we obtain
\begin{align}\label{07082}
& C \int_{\R^d} v_\varepsilon^{k+\sigma_1} dx+C \int_{\R^d} v_\varepsilon^{k+\sigma_2} dx \nonumber \\
\le & \frac{k(k-1)m_2}{(k+m_2-1)^2} \left\| \nabla v_\varepsilon^{\frac{k+m_2-1}{2}} \right\|_2^2+C(k) \|v_\varepsilon\|_{k}^{a_1}+\frac{k(k-1)m_2}{(k+m_2-1)^2} \left\| \nabla v_\varepsilon^{\frac{k+m_2-1}{2}} \right\|_2^2+C(k) \|v_\varepsilon\|_{k}^{a_2},
\end{align}
where $k> \frac{p}{p-2} \left(\max\{\sigma_1,\sigma_2\}-m_2+1 \right)$ and
\begin{align}
a_1=k \frac{\frac{k+m_2-1}{2}-\frac{k+\sigma_1}{p}}{\frac{k+m_2-1}{2}-\frac{k}{p}-\frac{\sigma_1}{2}}>k, \\
a_2=k \frac{\frac{k+m_2-1}{2}-\frac{k+\sigma_2}{p}}{\frac{k+m_2-1}{2}-\frac{k}{p}-\frac{\sigma_2}{2}}>k.
\end{align}
Therefore, substituting \eqref{07072}, \eqref{07081} and \eqref{07082} into \eqref{07070}, we conclude that
\begin{align}\label{07085}
& \frac{d}{dt} \int_{\R^d} (u_\varepsilon^k+v_\varepsilon^k)dx+ \frac{k(k-1)m_1}{(k+m_1-1)^2} \int_{\R^d} \left| \nabla u_\varepsilon^{\frac{k+m_1-1}{2}} \right|^2 dx+ \frac{k(k-1)m_2}{(k+m_2-1)^2} \int_{\R^d} \left| \nabla v_\varepsilon^{\frac{k+m_2-1}{2}} \right|^2 dx  \nonumber \\
\le & C \int_{\R^d} (u_\varepsilon^k+v_\varepsilon^k)dx+C(k) \|u_\varepsilon\|_{k}^{b_1}+C(k) \|u_\varepsilon\|_{k}^{b_2}+ C(k) \|v_\varepsilon\|_{k}^{a_1}+C(k) \|v_\varepsilon\|_{k}^{a_2} \nonumber \\
\le & C+ \left(\|u_\varepsilon\|_k^k+\|v_\varepsilon\|_k^k \right)^a,
\end{align}
where 
\begin{align*}
a &=\max \left\{ \frac{b_1}{k},\frac{b_2}{k},\frac{a_1}{k},\frac{a_2}{k} \right\} \\
&=\max\left\{ 1+ \frac{M}{k-\frac{d(M-m_1+1)}{2}},1+ \frac{M}{k-\frac{d(M-m_2+1)}{2}} \right\}, \quad M=\max\{\sigma_1,\sigma_2\}.
\end{align*}
As a result, the $L^k$-norm for $k>\frac{d}{2}\left(\max(\sigma_1,\sigma_2)-\min(m_1,m_2)+1\right)$ is bounded locally in time:
\begin{align}\label{nablaLr}
\|u_\varepsilon\|_k^k+\|v_\varepsilon\|_k^k \le \frac{C(k,d)}{(T_k-t)^{\frac{1}{a-1}}}, \quad T_k=(\|u_\varepsilon\|_k^k+\|v_\varepsilon\|_k^k)^{1-a}.
\end{align}
Moreover, returning to \eqref{07085}, we also deduce that
\begin{align}\label{07086}
\left\|\nabla u_\varepsilon^{\frac{k+m_1-1}{2}}\right\|_{L^2(0,T_k;L^2(\R^d))}+\left\|\nabla v_\varepsilon^{\frac{k+m_2-1}{2}}\right\|_{L^2(0,T_k;L^2(\R^d))} \le C\left( \|u_{0\varepsilon}\|_{k}, \|v_{0\varepsilon}\|_{k} \right).
\end{align}

{\it\textbf{Step 2}} ($L^\infty$-estimates)\quad As a direct result of Step 1, by applying the Moser iterative method, we have
\begin{align}\label{Linfinity23}
\displaystyle \sup_{0<t<T_k} \left(\|u_\varepsilon(\cdot,t)\|_{L^\infty(\R^d)}+ \|v_\varepsilon(\cdot,t)\|_{L^\infty(\R^d)}\right) \le C\left(\|u_{0\varepsilon}\|_{L^1\cap L^\infty(\R^d)}, \|v_{0\varepsilon}\|_{L^1\cap L^\infty(\R^d)} \right)
\end{align}
by a word-for-word translation of the proof of \cite[Theorem 4.2]{BL14}. Armed with the regularities \eqref{nablaLr}, \eqref{07086} and \eqref{Linfinity23}, it directly follows that
\begin{align}
\|\nabla u_\varepsilon \|_{L^2(0,T_k;L^2(\R^d))}+\|\nabla v_\varepsilon \|_{L^2(0,T_k;L^2(\R^d))} \le C, \\
\|u_\varepsilon^{\sigma_1} \nabla c_\varepsilon \|_{L^\infty(0,T_k;L^\infty(\R^d))} + \|v_\varepsilon^{\sigma_2} \nabla z_\varepsilon \|_{L^\infty(0,T_k;L^\infty(\R^d))} \le C.
\end{align}
Here, $C$ are constants depending only on $\|u_{0\varepsilon}\|_{L^1\cap L^\infty(\R^d)}$ and $\|v_{0\varepsilon}\|_{L1 \cap L^\infty(\R^d)}$. As a consequence, the a priori bounds in the theorem hold true uniformly in $\varepsilon$. Thus we can pass to the limit $(u_\varepsilon,v_\varepsilon) \to (u,v)$ as $\varepsilon \to 0$ (without relabeling) by the Lions-Aubin lemma, which provides the required time compactness. In the limit, we obtain the local existence of a weak solution $(u,v)$ to \eqref{uvsystem}. Finally, Following the argument in the proof of \cite[Theorem 2.4]{BJ09}, we obtain the characterization of the maximal existence time, and thus complete the proof. \quad $\square$
\end{proof}

\section{Global existence: proof of Theorem \ref{th1}}\label{global}

In this section, under suitable conditions on $m_1,m_2$ and $\sigma_1,\sigma_2$, we extend the local existence theory in Lemma \ref{ueps} to the following global existence result.

\begin{proposition}\label{prop1}
Let $d \ge 3, m_1>1,m_2>1, \sigma_1>1, \sigma_2>1$, and let $(u,v)$ be the weak solution obtained in Lemma \ref{ueps} on $[0,T_w)$. Under the condition 
\begin{align}
\min\{m_1,m_2\}>1+\max\{\sigma_1,\sigma_2 \}-2/d,
\end{align}
the weak solution exists globally in time, i.e. $T_w=\infty.$
\end{proposition}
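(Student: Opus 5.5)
By the blow-up criterion in Lemma \ref{ueps}, it suffices to show that $\|u_\varepsilon(\cdot,t)\|_\infty+\|v_\varepsilon(\cdot,t)\|_\infty$ stays bounded on every finite interval, uniformly in $\varepsilon$. The Moser iteration of Step 2 in Lemma \ref{ueps} turns a uniform $L^k$ bound, for some $k$ above the threshold $\frac d2(M-\min\{m_1,m_2\}+1)$ with $M=\max\{\sigma_1,\sigma_2\}$, into such an $L^\infty$ bound. So the whole task is a global-in-time $L^k$ estimate. The assumption \eqref{m1m2} says exactly that
\begin{align*}
\frac d2\left(M-\min\{m_1,m_2\}+1\right)<1 .
\end{align*}
So we may work with \emph{any} $k>1$, and in particular with a $k$ slightly above $1$, where an $L^1$-type bound is available.

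\textbf{First step: $L^1$ control.} Integrating the equations, the logistic term gives
\begin{align*}
\frac{d}{dt}\|u_\varepsilon\|_1\le N_1\|u_\varepsilon\|_1-\|u_\varepsilon\|_2^2 .
\end{align*}
This already yields the exponential bound on any finite interval. Since we only need $T_w=\infty$, boundedness on finite time intervals is enough, so this suffices.

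\textbf{Second step: the $L^k$ inequality.} Return to \eqref{07070}. Apply Lemma \ref{GNS1} with
\begin{align*}
w=u_\varepsilon^{(k+m_1-1)/2},\qquad q=\frac{2(k+\sigma_i)}{k+m_1-1},
\end{align*}
but now with $r=\frac{2}{k+m_1-1}$, i.e.\ interpolating against the $L^1$ norm rather than the $L^k$ norm. The admissibility conditions $q<p$ and $\frac qr<\frac2r+1-\frac2p$ read
\begin{align*}
k+\sigma_i<1+\frac{k+m_1-1}{2}\Bigl(1-\frac2p\Bigr)\cdot\frac{2}{2}\cdots,
\end{align*}
which amounts to $\sigma_i-m_1+1<\frac{2}{d}$ plus a condition that is harmless for $k$ near $1$. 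This is precisely \eqref{m1m2}. The same holds for $v_\varepsilon$ with $m_2$.

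We then obtain
\begin{align*}
C\int u_\varepsilon^{k+\sigma_i}\le \tfrac12\cdot\text{(dissipation)}+C\|u_\varepsilon\|_1^{\gamma}.
\end{align*}
Together with the $L^1$ bound, this gives
\begin{align*}
\frac{d}{dt}\bigl(\|u_\varepsilon\|_k^k+\|v_\varepsilon\|_k^k\bigr)+\text{dissipation}\le C(T).
\end{align*}
The linear term $N_1k\int u_\varepsilon^k$ is absorbed by $k\int u_\varepsilon^{k+1}$ via Young's inequality. Hence $\|u_\varepsilon\|_k+\|v_\varepsilon\|_k\le C(T)$ for some $k$ with $k>\frac d2(M-\min m_i+1)$. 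The same argument works for larger $k$ directly, since the exponent condition only improves as $k$ grows.

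\textbf{Main obstacle and conclusion.} The step I expect to be the main obstacle is the exponent bookkeeping in the Gagliardo--Nirenberg application. One must verify that with $r$ tied to the $L^1$ norm, all conditions of Lemma \ref{GNS1} hold simultaneously, i.e.\ $1\le r<q<p$ and $\lambda q<2$, under \eqref{m1m2}. The cross terms $u^{k+\sigma_1-1}v$ must also have been reduced by Young's inequality to pure powers, as in \eqref{07070}. If $r<1$ arises for $k$ close to $1$, I would instead interpolate against $\|u_\varepsilon\|_{k_0}$ for a fixed small $k_0$ bounded by a preliminary step. Alternatively, one can use that $\int u_\varepsilon^{k+1}$ dominates for small exponents.

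Once the $L^k$ bound holds on $[0,T]$ for every $T$, Step 2 of Lemma \ref{ueps} gives a finite $L^\infty$ bound on $[0,T]$. This is uniform in $\varepsilon$ and passes to the limit, contradicting \eqref{criterion} if $T_w<\infty$. Hence $T_w=\infty$. Uniform-in-time boundedness, as claimed in Theorem \ref{th1}, would follow by additionally using the absorbing term $k\int u^{k+1}$ to give the differential inequality $y'\le C-cy$.
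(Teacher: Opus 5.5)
Your route is sound and differs from the paper's. The one real problem is how you handle $r<1$.

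\textbf{The $r<1$ issue.} With $w=u_\varepsilon^{(k+m_1-1)/2}$, $q=\frac{2(k+\sigma_i)}{k+m_1-1}$ and $r=\frac{2}{k+m_1-1}$, one has $q/r=k+\sigma_i$, $2/r=k+m_1-1$ and $1-2/p=2/d$. So the condition $\frac qr<\frac2r+1-\frac2p$ is exactly $\sigma_i<m_1-1+\frac2d$, and it does not depend on $k$. The condition $q<p$, i.e.\ $(d-2)\sigma_i-d(m_1-1)<2k$, is then automatic, because the left side is $<2-2\sigma_i<0$.

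What actually fails is $r\ge1$, which means $k\le 3-m_1$. This fails for all large $k$, and for every $k>1$ once $m_1\ge2$; it is not a problem ``for $k$ close to $1$''. Two consequences follow:
\begin{itemize}
\item Your first fallback, a preliminary bound on $\|u_\varepsilon\|_{k_0}$ for small $k_0>1$, runs into the same obstruction when $m_1\ge2$.
\item Your claim that the argument works for larger $k$ because ``the exponent condition only improves'' is not right. That condition is $k$-independent, and $r$ decreases in $k$.
\end{itemize}

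The correct fix is that $r\ge1$ is not needed at all. For any $0<r<q<p$, H\"older gives $\|w\|_q\le\|w\|_r^{1-\lambda}\|w\|_p^{\lambda}$ with $\frac1q=\frac{1-\lambda}{r}+\frac{\lambda}{p}$. Sobolev ($d\ge3$) gives $\|w\|_p\le S\|\nabla w\|_2$. Young's inequality, using $\lambda q<2$, then reproduces the conclusion of Lemma \ref{GNS1}, here with $\|w\|_r=\|u_\varepsilon\|_1^{(k+m_1-1)/2}$.

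With this, your estimate holds for every $k>1$ on $[0,T]$. The remaining steps are fine: the cross terms via Young, the bound $N_1k u_\varepsilon^k\le\frac k2u_\varepsilon^{k+1}+Cu_\varepsilon$, the exponential mass bound, Moser iteration, and the contradiction with \eqref{criterion}.

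\textbf{Comparison with the paper.} The paper keeps $r=\frac{2k'}{k+m_1-1}$ with $1<k'<k+1$. It interpolates $\|u\|_{k'}$ between $\|u\|_1$ and $\|u\|_{k+1}$ and absorbs the result into the logistic term $k\int u^{k+1}$; your second fallback is essentially this. The cost is the extra requirement $b_\alpha\theta<k+1$, which the paper reduces to $(m_1-1-\sigma_2+\frac{2\sigma_2}{d})(k'-1)<(m_1-1-\sigma_2+\frac2d)k$. That is a $k$-dependent condition relying on the logistic damping.

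Your direct interpolation against the mass yields \eqref{m1m2} at once as the natural subcritical condition; for $\sigma=1$ it is the classical $m>2-\frac2d$. It controls the aggregation terms using only diffusion and mass, and avoids the paper's bookkeeping. Note also that the paper's $r$ is below $1$ whenever $k'<\frac{k+m_1-1}{2}$, and its constraints on $k'$ do not exclude this. So the paper tacitly relies on the same extension of Lemma \ref{GNS1}.

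\textbf{Your closing remark.} The claim about time-uniform bounds does not follow as stated. On $\R^d$ the mass is only bounded by $\|u_0\|_1e^{N_1t}$, all your constants depend on it, and $\int u^{k+1}$ does not dominate $\int u^k$ without such a bound. This does not affect the proof of $T_w=\infty$.
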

\begin{proof}
We divide the proof into three steps. First, by imposing suitable conditions on $m_1,m_2$, we show that the $L^r$-norm of the solution is uniformly bounded for all $t>0.$ Second, an iterative argument yields the uniform boundedness of the solution in time. Finally, we prove that the local weak solution obtained in Lemma \ref{ueps} exists globally.

We first establish the $L^1$-norm of solutions, which will be used in the derivation of the $L^r$-bounds for $1<r<\infty.$ Integrating the first and third equations of \eqref{uvsystem} over $\R^d$, we obtain
\begin{align*}
\frac{d}{dt}\int_{\R^d} udx \le N_1 \int_{\R^d} u dx, \\
\frac{d}{dt}\int_{\R^d} vdx \le N_2 \int_{\R^d} v dx,
\end{align*}
which implies
\begin{align}\label{um1vm1}
\int_{\R^d} udx \le \|u_0\|_1 e^{N_1 t}, \quad 0<t<\infty, \\
\int_{\R^d} vdx \le \|v_0\|_1 e^{N_2 t}, \quad 0<t<\infty. 
\end{align}

Now we turn to the $L^k$-estimates for $k>1$. Multiplying the first equation of \eqref{uvsystem} by $k u^{k-1}$ and the third equation of \eqref{uvsystem} by $kv^{k-1}$, we obtain
\begin{align*}
&\frac{d}{dt} \int_{\R^d} (u^k+v^k) dx \\
&+\frac{4k(k-1)m_1}{(k+m_1-1)^2} \int_{\R^d} \left| \nabla u^{\frac{k+m_1-1}{2}} \right|^2 dx+k\int_{\R^d} u^{k+1} dx+
\frac{k(k-1)}{k+\sigma_1-1}\int_{\R^d} u^{k+\sigma_1-1} c dx \\
&+ \frac{4k(k-1)m_2}{(k+m_2-1)^2} \int_{\R^d} \left| \nabla v^{\frac{k+m_2-1}{2}} \right|^2 dx+k\int_{\R^d} v^{k+1} dx+
\frac{k(k-1)}{k+\sigma_2-1}\int_{\R^d} v^{k+\sigma_2-1} z dx \\
= &N_1 k\int_{\R^d} u^k dx+\frac{k(k-1)}{k+\sigma_1-1} a_{11} \int_{\R^d} u^{k+\sigma_1} dx+ \frac{k(k-1)}{k+\sigma_1-1} a_{12} \int_{\R^d} u^{k+\sigma_1-1}v dx \\
&+N_2 k\int_{\R^d} v^k dx+\frac{k(k-1)}{k+\sigma_2-1} a_{22} \int_{\R^d} v^{k+\sigma_2} dx+ \frac{k(k-1)}{k+\sigma_2-1} a_{21} \int_{\R^d} v^{k+\sigma_2-1} u dx.
\end{align*}
By Young's inequality and the interpolation inequality, we further have
\begin{align}\label{260705}
& \frac{d}{dt} \int_{\R^d} (u^k+v^k) dx+\frac{4k(k-1)m_1}{(k+m_1-1)^2} \int_{\R^d} \left| \nabla u^{\frac{k+m_1-1}{2}} \right|^2 dx+k\int_{\R^d} u^{k+1} dx \nonumber \\
&+ \frac{4k(k-1)m_2}{(k+m_2-1)^2} \int_{\R^d} \left| \nabla v^{\frac{k+m_2-1}{2}} \right|^2 dx+k\int_{\R^d} v^{k+1} dx \nonumber \\
\le & \int_{\R^d} u^{k+\sigma_1} dx+C(\|u\|_1)+C \int_{\R^d} u^{k+\sigma_1} dx+C \int_{\R^d} v^{k+\sigma_1} dx \nonumber \\
&+\int_{\R^d} v^{k+\sigma_2} dx+C(\|v\|_1)+C \int_{\R^d} v^{k+\sigma_2} dx+C \int_{\R^d} u^{k+\sigma_2} dx.
\end{align}
Without loss of generality, we assume
\begin{align}
\sigma_2>\sigma_1,
\end{align}
from which we can infer from \eqref{260705} that
\begin{align}\label{2607051}
& \frac{d}{dt} \int_{\R^d} (u^k+v^k) dx+\frac{4k(k-1)m_1}{(k+m_1-1)^2} \int_{\R^d} \left| \nabla u^{\frac{k+m_1-1}{2}} \right|^2 dx+k\int_{\R^d} u^{k+1} dx \nonumber \\
& +\frac{4k(k-1)m_2}{(k+m_2-1)^2} \int_{\R^d} \left| \nabla v^{\frac{k+m_2-1}{2}} \right|^2 dx+k\int_{\R^d} v^{k+1} dx \nonumber \\
\le & C+C \int_{\R^d} u^{k+\sigma_2} dx+C \int_{\R^d} v^{k+\sigma_2} dx.
\end{align}
Letting
\begin{align}\label{33}
w=u^{\frac{k+m_1-1}{2}}, q=\frac{2(k+\sigma_2)}{k+m_1-1}, r=\frac{2k'}{k+m_1-1}
\end{align}
in Lemma \ref{GNS1} with $k>\frac{2 \sigma_2-p(m_1-1)}{p-2}$ (which is $q<p$) and $\frac{p}{p-2}(\sigma_2-m_1+1)<k'<k+1<k+\sigma_2$ (which is $\frac{q}{r}<\frac{2}{r}+1-\frac{2}{p}$), we have
\begin{align}\label{07052}
C \int_{\R^d} u^{k+\sigma_2} dx \le \frac{2k(k-1)m_1}{(k+m_1-1)^2} \left\| \nabla u^{\frac{k+m_1-1}{2}} \right\|_2^2+C(k) \|u\|_{k'}^{b_\alpha},
\end{align}
where
\begin{align}
b_\alpha=\frac{(k+\sigma_2)(1-\lambda)}{1-\frac{\lambda q}{2}},\quad \lambda=\frac{\frac{k+m_1-1}{2k'}-\frac{k+m_1-1}{2(k+\sigma_2)}}{\frac{k+m_1-1}{2k'}-\frac{1}{p}}.
\end{align}
We further apply $1<k'<k+1$ to obtain
\begin{align}\label{22}
C \int_{\R^d} u^{k+\sigma_2} dx \le \frac{2k(k-1)m_1}{(k+m_1-1)^2} \left\| \nabla u^{\frac{k+m_1-1}{2}} \right\|_2^2+C(k) \|u\|_{k+1}^{b_\alpha \theta} \|u\|_1^{b_\alpha (1-\theta)},
\end{align}
where $\theta=\frac{1-\frac{1}{k'}}{1-\frac{1}{k+1}}$. To use Young's inequality, we need the following condition:
\begin{align}\label{11}
b_\alpha \theta<k+1,
\end{align}
which is equivalent to
\begin{align}
(k+\sigma_2)(1-\lambda)\left(1-\frac{1}{k'}\right)<k-\frac{\lambda qk}{2}.
\end{align}
Substituting $\lambda$ into the above formula, one has
\begin{align*}
&\left( \frac{k+m_1-1}{2k'}-\frac{1}{p}\right) k'\sigma_2-\left( \frac{k+m_1-1}{2k'}-\frac{1}{p}\right)(k+\sigma_2) \\
&-\left( \frac{k+m_1-1}{2k'}-\frac{k+m_1-1}{2(k+\sigma_2)}\right) k'(k+\sigma_2)+\left( \frac{k+m_1-1}{2k'}-\frac{k+m_1-1}{2(k+\sigma_2)}\right)(k+\sigma_2) \\
< & \frac{kk'}{2}-\frac{k(k+\sigma_2)}{2}.
\end{align*}
This can be rewritten as
\begin{align}
(m_1-1-\sigma_2+2\sigma_2/d)(k'-1)<(m_1-1-\sigma_2+2/d)k.
\end{align}
Hence if 
\begin{align}\label{m1}
m_1>1+\sigma_2-2/d,
\end{align}
then \eqref{11} holds true for
\begin{align}
k>\frac{m_1-1-\sigma_2+2\sigma_2/d}{m_1-1-\sigma_2+2/d}(k'-1)
\end{align}
for any $\max\left(\frac{p}{p-2}(\sigma_2-m_1+1),1\right)<k'<k+1.$ Now combing back to \eqref{22}, we infer from \eqref{11} by using Young's inequality that
\begin{align}\label{44}
C \int_{\R^d} u^{k+\sigma_2} dx \le & \frac{2k(k-1)m_1}{(k+m_1-1)^2} \left\| \nabla u^{\frac{k+m_1-1}{2}} \right\|_2^2+C(k) \|u\|_{k+1}^{b_\alpha \theta} \|u\|_1^{b_\alpha (1-\theta)} \nonumber \\
\le & \frac{2k(k-1)m_1}{(k+m_1-1)^2} \left\| \nabla u^{\frac{k+m_1-1}{2}} \right\|_2^2+\frac{k}{2} \|u\|_{k+1}^{k+1}+C(\|u\|_1). 
\end{align}
Following similar arguments as in \eqref{33}-\eqref{44}, we obtain the estimates for $v$: if
\begin{align}\label{m2}
m_2>1+\sigma_2-2/d,
\end{align}
then for
\begin{align}
k>\frac{m_2-1-\sigma_2+2\sigma_2/d}{m_2-1-\sigma_2+2/d}(\overline{k}-1)
\end{align}
with any $\max\left(\frac{p}{p-2}(\sigma_2-m_2+1),1\right)<\overline{k}<k+1,$ the following holds true:
\begin{align}\label{55}
C \int_{\R^d} v^{k+\sigma_2} dx \le \frac{2k(k-1)m_2}{(k+m_2-1)^2} \left\| \nabla v^{\frac{k+m_2-1}{2}} \right\|_2^2+\frac{k}{2} \|v\|_{k+1}^{k+1}+C(\|v\|_1). 
\end{align}
Therefore, plugging \eqref{44} and \eqref{55} into \eqref{2607051}, we thus end up with 
\begin{align}\label{66}
&\frac{d}{dt} \int_{\R^d} (u^k+v^k) dx+\frac{4k(k-1)m_1}{(k+m_1-1)^2} \int_{\R^d} \left| \nabla u^{\frac{k+m_1-1}{2}} \right|^2 dx+k\int_{\R^d} u^{k+1} dx \nonumber \\
&+ \frac{4k(k-1)m_2}{(k+m_2-1)^2} \int_{\R^d} \left| \nabla v^{\frac{k+m_2-1}{2}} \right|^2 dx+k\int_{\R^d} v^{k+1} dx \nonumber \\
\le & C(k,\|u\|_1,\|v\|_1)
\end{align}
for any $$k> \max\left(\frac{m_1-1-\sigma_2+2\sigma_2/d}{m_1-1-\sigma_2+2/d}(k'-1), \frac{m_2-1-\sigma_2+2\sigma_2/d}{m_2-1-\sigma_2+2/d}(\overline{k}-1)\right)$$
with any $k'>\max\left(\frac{p}{p-2}(\sigma_2-m_1+1),1\right)$ and $\overline{k}>\max\left(\frac{p}{p-2}(\sigma_2-m_2+1),1\right).$ 

Consequently, combining \eqref{m1} and \eqref{m2}, we obtain that for
\begin{align}\label{77}
\min(m_1,m_2)>1+\max(\sigma_1,\sigma_2)-2/d,
\end{align}
the following estimates hold for any $0<t<\infty,$
\begin{align}
& \int_{\R^d} u^k dx+\int_{\R^d} v^k dx <\infty, \label{07090}\\
& \int_0^t \int_{\R^d} \left| \nabla u^{\frac{k+m_1-1}{2}} \right|^2 dxds+\int_0^t \int_{\R^d} \left| \nabla v^{\frac{k+m_2-1}{2}} \right|^2 dxds<\infty
\end{align}
for any $1 <k<\infty$. 

Based on \eqref{07090}, we apply the Moser iteration method to establish the uniform boundedness of the weak solution, 
\begin{align}\label{88}
\|u\|_{L^\infty(\R^d)}+\|v\|_{L^\infty(\R^d)} \le C\left(\|u_0\|_{L^1 \cap L^\infty(\R^d)}, \|v_0\|_{L^1 \cap L^\infty(\R^d)}\right),\quad \text{for any } t>0.
\end{align}
By Lemma \ref{ueps}, there exists a time $T_w>0$ and a solution $(u,v)$ to \eqref{uvsystem} on $[0,T_w)$ with initial data \eqref{initialdata}. The uniform bound \eqref{88} ensures, again by Lemma \ref{ueps}, that the solution can be extended globally in time, i.e. $T_w=\infty.$ Hence, we obtain a global weak solution $(u,v)$ to \eqref{uvsystem} satisfying the following regularities:
\begin{align}
& \|u\|_{L^\infty(0,\infty;L^1\cap L^\infty(\R^d))}+\|v\|_{L^\infty(0,\infty;L^1\cap L^\infty(\R^d))}<\infty, \label{07101}\\
& \left\| \nabla u^{\frac{k+m_1-1}{2}} \right\|_{L^2(0,\infty;L^2(\R^d))}+\left\| \nabla v^{\frac{k+m_2-1}{2}} \right\|_{L^2(0,\infty;L^2(\R^d))}<\infty, \label{07102}\\
&\|u\|_{L^{k+1}(0,\infty;L^{k+1}(\R^d))}+\|v\|_{L^{k+1}(0,\infty; L^{k+1}(\R^d))}<\infty \label{07103}
\end{align}
for any $1<k<\infty.$  \quad $\square$
\end{proof}

\indent\textbf{Proof of Theorem \ref{th1}.} Combining Proposition \ref{prop1} with the blow-up criterion in Lemma \ref{ueps} completes the proof of Theorem \ref{th1}.

\section{Long time behavior: proof of Theorem \ref{th2}}\label{longtime}
The constant steady state of \eqref{uvsystem} is given by
\begin{align}
u_*=N_1,\quad v_*=N_2, \quad c_*=a_{11} N_1+a_{12} N_2,\quad z_*=a_{21} N_1+a_{22} N_2.
\end{align}
In this section, we investigate the convergence of the global solution obtained in Theorem \ref{th1} toward the constant equilibrium. The analysis relies on an energy functional motivated by \cite{Bai16}.

Let us define
\begin{align}
A(t)=\int_{\R^d} \left(u-N_1-N_1 \ln \frac{u}{N_1}\right) dx, \label{At}\\
B(t)=\int_{\R^d} \left(v-N_2-N_2 \ln \frac{v}{N_2}\right) dx. \label{Bt}
\end{align}
We first establish an energy inequality for $A(t)+B(t)$. 
\begin{lemma}\label{lem1}
Let $m_1>1,m_2>1,\sigma_1=\frac{m_1+1}{2}, \sigma_2=\frac{m_2+1}{2}$. Under assumptions \eqref{initialdata} and 
\begin{align}
&m_1>\frac{m_2-1}{2}+2-2/d \\
\text{or}\quad & m_2>\frac{m_1-1}{2}+2-2/d.
\end{align} 
Suppose also that the positive numbers $a_{ij}$ satisfy 
\begin{align}
\left\{
  \begin{array}{ll}
    (a_{11}^2+a_{12}^2)N_1<16 m_1, \\[1mm]
    \frac{(a_{11}^2+a_{12}^2)N_1}{m_1}+\frac{(a_{21}^2+a_{22}^2)N_2}{m_2}<16+\frac{(a_{11}a_{22}-a_{12}a_{21})^2 N_1N_2}{16m_1m_2}.
  \end{array}
\right.
\end{align}
Then there exists $\delta>0$ such that 
\begin{align}\label{0630}
\frac{d}{dt} (A(t)+B(t)) \le -\delta F(t), 
\end{align}
where
\begin{align}\label{Ft}
F(t)= & \int_{\R^d} \left( \nabla u^{\frac{m_1-1}{2}}-\frac{m_1-1}{4m_1} \nabla c \right)^2+ \left( \nabla v^{\frac{m_2-1}{2}}-\frac{m_2-1}{4m_2} \nabla z \right)^2 dx \nonumber \\
& + \int_{\R^d} (u-N_1)^2+(v-N_2)^2+(c-c_*)^2+(z-z_*)^2 dx.
\end{align}
\end{lemma}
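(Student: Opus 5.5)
The plan is to compute $\frac{d}{dt}(A+B)$ directly from \eqref{uvsystem}, rewrite it as a pointwise quadratic form, and obtain \eqref{0630} from positive definiteness of that form. Since $\ln u$ is involved, I would carry out the computation on the regularized problem \eqref{kseps}. There $u_\varepsilon, v_\varepsilon>0$, and the extra $\varepsilon\Delta$ terms only contribute the good quantity $-\varepsilon N_1\int |\nabla u_\varepsilon|^2/u_\varepsilon^2$. I would then pass to the limit using the uniform bounds \eqref{07101}--\eqref{07103}. The exponent hypothesis enters only here. With $\sigma_i=\frac{m_i+1}{2}$, the condition $m_1>\frac{m_2-1}{2}+2-\frac 2d$ (or its symmetric version) is exactly what is needed for Theorem \ref{th1}/Proposition \ref{prop1} to give a global, uniformly bounded solution. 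That is what justifies the manipulations for all $t>0$. The integrability of $A,B$ on $\R^d$ I will treat at the formal level, as the paper does.

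\textbf{Step 1: the $u$-part.} Set $w=u^{\frac{m_1-1}{2}}$. Then
\begin{align*}
\frac{dA}{dt}=\int_{\R^d}\Bigl(1-\frac{N_1}{u}\Bigr)u_t\,dx .
\end{align*}
I would evaluate the three contributions separately.
\begin{itemize}
\item Diffusion gives $-N_1 m_1\int u^{m_1-3}|\nabla u|^2=-\frac{4N_1m_1}{(m_1-1)^2}\int|\nabla w|^2$.
\item Chemotaxis gives $N_1\int u^{\sigma_1-2}\nabla u\cdot\nabla c$. The choice $\sigma_1=\frac{m_1+1}{2}$ gives $u^{\sigma_1-2}\nabla u=\frac{2}{m_1-1}\nabla w$, so this term equals $\frac{2N_1}{m_1-1}\int\nabla w\cdot\nabla c$.
\item The logistic term gives exactly $-\int(u-N_1)^2$.
\end{itemize}
Completing the square in the first two contributions yields
\begin{align*}
\frac{dA}{dt}=-\frac{4N_1m_1}{(m_1-1)^2}\int\Bigl|\nabla w-\frac{m_1-1}{4m_1}\nabla c\Bigr|^2-\int(u-N_1)^2+\frac{N_1}{4m_1}\int|\nabla c|^2 .
\end{align*}
The same computation for $B$ gives the analogous identity with $\frac{N_2}{4m_2}\int|\nabla z|^2$.

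\textbf{Step 2: eliminate the gradients of $c,z$.} Write $\tilde u=u-N_1$, $\tilde v=v-N_2$, $\tilde c=c-c_*$, $\tilde z=z-z_*$. Then $-\Delta\tilde c+\tilde c=a_{11}\tilde u+a_{12}\tilde v$. Testing this with $\tilde c$ gives the identity
\begin{align*}
\int|\nabla c|^2=\int\tilde c\,(a_{11}\tilde u+a_{12}\tilde v)-\int\tilde c^2 ,
\end{align*}
and similarly for $z$. Set $\alpha=\frac{N_1}{4m_1}$ and $\beta=\frac{N_2}{4m_2}$. After substitution,
\begin{align*}
\frac{d}{dt}(A+B)\le -(\text{square terms})-\int Q(\tilde u,\tilde v,\tilde c,\tilde z)\,dx,
\end{align*}
where
\begin{align*}
Q=\tilde u^2+\tilde v^2+\alpha\tilde c^2-\alpha\tilde c(a_{11}\tilde u+a_{12}\tilde v)+\beta\tilde z^2-\beta\tilde z(a_{21}\tilde u+a_{22}\tilde v).
\end{align*}

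\textbf{Step 3 (main obstacle): positive definiteness of $Q$.} I would minimize $Q$ in $\tilde c$ and $\tilde z$. The minimizers are $\tilde c=\frac12(a_{11}\tilde u+a_{12}\tilde v)$ and $\tilde z=\frac12(a_{21}\tilde u+a_{22}\tilde v)$, which reduces $Q$ to the $2\times2$ form
\begin{align*}
\tilde u^2+\tilde v^2-\frac{\alpha}{4}(a_{11}\tilde u+a_{12}\tilde v)^2-\frac{\beta}{4}(a_{21}\tilde u+a_{22}\tilde v)^2 .
\end{align*}
By Sylvester's criterion this form is positive definite if and only if two conditions hold.
\begin{itemize}
\item The diagonal entry satisfies $1-\frac{\alpha a_{11}^2}{4}-\frac{\beta a_{21}^2}{4}>0$.
\item The determinant is positive. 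Expanding it gives $1-\frac{\alpha}{4}(a_{11}^2+a_{12}^2)-\frac{\beta}{4}(a_{21}^2+a_{22}^2)+\frac{\alpha\beta}{16}(a_{11}a_{22}-a_{12}a_{21})^2>0$.
\end{itemize}
Substituting $\alpha,\beta$ shows the determinant condition is precisely the second assumed inequality. The first assumed inequality, $(a_{11}^2+a_{12}^2)N_1<16m_1$, supplies the positivity of the leading $(\tilde u,\tilde c)$ block in the ordering I would use. This bookkeeping of which principal minor matches the hypothesis is where I expect the most care to be needed.

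\textbf{Conclusion.} Strict positive definiteness gives $Q\ge 2\delta_0(\tilde u^2+\tilde v^2+\tilde c^2+\tilde z^2)$ for some $\delta_0>0$. To recover the gradient part of $F$, I would not use the full strength of the square terms. Their coefficients $\frac{4N_im_i}{(m_i-1)^2}$ are positive, so taking $\delta\le\min\{\delta_0,\frac{4N_1m_1}{(m_1-1)^2},\frac{4N_2m_2}{(m_2-1)^2}\}$ yields \eqref{0630}.
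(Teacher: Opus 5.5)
Your Steps 1 and 2 are the paper's computation. With the paper's $a=\frac{4m_1N_1}{(m_1-1)^2}$ and $b=\frac{2N_1}{m_1-1}$ one has $\frac{b}{2a}=\frac{m_1-1}{4m_1}$ and $\frac{b^2}{4a}=\frac{N_1}{4m_1}$, which is exactly your completion of the square. The term $\int|\nabla c|^2$ is removed by the same testing of the elliptic equation.

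You depart from the paper only in Step 3, where you use a Schur-complement reduction instead of Sylvester on the $4\times 4$ matrix. There the argument does not close. For the reduced form $S(\tilde u,\tilde v)$ you correctly list two conditions, and the determinant condition is indeed equivalent to the second hypothesis. The other condition is, with your $\alpha=\frac{N_1}{4m_1}$ and $\beta=\frac{N_2}{4m_2}$,
\begin{align*}
1-\frac{\alpha a_{11}^2+\beta a_{21}^2}{4}>0 \iff \frac{N_1a_{11}^2}{m_1}+\frac{N_2a_{21}^2}{m_2}<16 .
\end{align*}
This involves $N_2,m_2,a_{21}$, so it is neither the first hypothesis $(a_{11}^2+a_{12}^2)N_1<16m_1$ nor implied by it alone. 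Appealing to ``the leading $(\tilde u,\tilde c)$ block'' does not help. $S$ has no such block. In the $4\times4$ matrix, the $(\tilde u,\tilde c)$ minor is equivalent to $N_1a_{11}^2<16m_1$, which is not the first hypothesis. It is also not part of a nested chain of leading minors ending at the full determinant. So one of the two conditions of your own criterion is never verified.

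Either of the following repairs this. Put $\xi=\frac{\alpha}{4}(a_{11}^2+a_{12}^2)$, $\eta=\frac{\beta}{4}(a_{21}^2+a_{22}^2)$ and $\rho=\frac{\alpha\beta}{16}(a_{11}a_{22}-a_{12}a_{21})^2$. The hypotheses then read $\xi<1$ and $\det S=1-\xi-\eta+\rho>0$.

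\textbf{(a)} Do what the paper does: order the variables as $(\tilde u,\tilde v,\tilde c,\tilde z)$. The leading principal minors are $1$, $1$, $\alpha(1-\xi)$ and $\alpha\beta\det S$. These are positive exactly under the two hypotheses.

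\textbf{(b)} Keep your reduction, but test the trace of $S$ instead of its $(1,1)$ entry. Lagrange's identity gives $\rho\le\xi\eta$, so $0<1-\xi-\eta+\rho\le(1-\xi)(1-\eta)$. Since $\xi<1$, this forces $\eta<1$; in other words, the hypotheses already contain the symmetric condition $(a_{21}^2+a_{22}^2)N_2<16m_2$. Hence $\mathrm{tr}\,S=2-\xi-\eta>0$, and together with $\det S>0$ this makes $S$ positive definite. The identity $Q=\alpha\bigl(\tilde c-\tfrac12(a_{11}\tilde u+a_{12}\tilde v)\bigr)^2+\beta\bigl(\tilde z-\tfrac12(a_{21}\tilde u+a_{22}\tilde v)\bigr)^2+S(\tilde u,\tilde v)$ then gives positive definiteness in all four variables, and your conclusion goes through.

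A side remark: the exponent condition is not ``exactly'' the hypothesis of Theorem \ref{th1}. With $\sigma_i=\frac{m_i+1}{2}$, that theorem needs $\min\{m_1,m_2\}>\frac{\max\{m_1,m_2\}-1}{2}+2-\frac2d$. That is, it needs both inequalities, not one of them; $m_1$ large with $m_2$ close to $1$ satisfies only the first. The paper's proof of this lemma is purely formal and never uses the exponent condition, so this does not affect the comparison. It does mean your justification paragraph does not follow from the hypotheses as stated.
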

\begin{proof}
We introduce the constants
\begin{align}\label{cons}
a=\frac{4m_1 N_1}{(m_1-1)^2},~~b=\frac{2N_1}{m_1-1},~~\alpha=\frac{4m_2 N_2}{(m_2-1)^2},~~\beta=\frac{2N_2}{m_2-1}.
\end{align}
A direct computation gives
\begin{align*}
A'(t)=&-a \int_{\R^d} \left| \nabla u^{\frac{m_1-1}{2}} \right|^2 dx+b \int_{\R^d} \nabla u^{\frac{m_1-1}{2}} \cdot \nabla c dx-\int_{\R^d} (u-N_1)^2 dx \\
=&-a \int_{\R^d} \left( \nabla u^{\frac{m_1-1}{2}}-\frac{b}{2a} \nabla c \right)^2 dx+\frac{b^2}{4a} \int_{\R^d} |\nabla c|^2 dx-\int_{\R^d} (u-N_1)^2 dx \\
=&-a \int_{\R^d} \left( \nabla u^{\frac{m_1-1}{2}}-\frac{b}{2a} \nabla c \right)^2 dx -\bigg(-\frac{a_{11}b^2}{4a} \int_{\R^d} (u-N_1)(c-c_*) dx \\
&-\frac{a_{12}b^2}{4a} \int_{\R^d} (v-N_2)(c-c_*) dx+ \frac{b^2}{4a}\int_{\R^d}(c-c_*)^2 dx +\int_{\R^d}(u-N_1)^2 dx \bigg).
\end{align*}
Similarly, we have
\begin{align*}
B'(t)=& -\alpha \int_{\R^d} \left| \nabla v^{\frac{m_2-1}{2}} \right|^2 dx+\beta \int_{\R^d} \nabla v^{\frac{m_2-1}{2}}\cdot \nabla z dx-\int_{\R^d} (v-N_2)^2 dx \\
= & -\alpha \int_{\R^d} \left( \nabla v^{\frac{m_2-1}{2}}-\frac{\beta}{2\alpha} \nabla z \right)^2 dx +\frac{\beta^2}{4\alpha} \int_{\R^d} |\nabla z|^2 dx-\int_{\R^d} (v-N_2)^2 dx \\
=&-\alpha \int_{\R^d} \left( \nabla v^{\frac{m_2-1}{2}}-\frac{\beta}{2\alpha} \nabla z \right)^2 dx -\bigg(-\frac{a_{21} \beta^2}{4\alpha} \int_{\R^d} (u-N_1)(z-z_*) dx \\
&-\frac{a_{22}\beta^2}{4\alpha} \int_{\R^d} (v-N_2)(z-z_*) dx+\frac{\beta^2}{4\alpha}\int_{\R^d}(z-z_*)^2 dx +\int_{\R^d}(v-N_2)^2 dx \bigg).
\end{align*}
Taking $A'(t)$ and $B'(t)$ into account, we obtain
\begin{align}\label{07095}
&\frac{d}{dt} (A(t)+B(t)) \nonumber \\
=&-a \int_{\R^d} \left( \nabla u^{\frac{m_1-1}{2}}-\frac{b}{2a} \nabla c \right)^2 dx-\alpha \int_{\R^d} \left( \nabla v^{\frac{m_2-1}{2}}-\frac{\beta}{2\alpha} \nabla z \right)^2 dx-\int_{\R^d} XPX^T dx,
\end{align}
where the vector $X$ and the matrix $P$ are defined as
\begin{align*}
X=\left( u-N_1,v-N_2,c-c_*,z-z_* \right), \\
P=\begin{pmatrix}
                                                   1 & 0 & - \frac{a_{11}b^2}{8a} & -\frac{a_{21} \beta^2}{8\alpha} \\
                                                   0 & 1 & -\frac{a_{12} b^2}{8a} & -\frac{a_{22} \beta^2}{8 \alpha} \\
                                                   -\frac{a_{11} b^2}{8a} & -\frac{a_{12} b^2}{8a} & \frac{b^2}{4a} & 0 \\
                                                   -\frac{a_{21} \beta^2}{8\alpha} & -\frac{a_{22} \beta^2}{8 \alpha} & 0 & \frac{\beta^2}{4\alpha} \\
                                                 \end{pmatrix}.
\end{align*}
The key step in establishing \eqref{0630} lies in proving that $P$ is positive definite. Using Sylvester's criterion, we claim that $P$ is positive definite provided that
\begin{align}
\left\{
  \begin{array}{cc}
  (a_{11}^2+a_{12}^2)b^2<16a, \\[1mm]
16+ \frac{(a_{11}a_{22}-a_{12}a_{21})^2 b^2 \beta^2}{16a \alpha}-\frac{a_{11}^2 b^2}{a}-\frac{a_{12}^2 b^2}{a}-\frac{a_{21}^2 \beta^2}{\alpha}-\frac{a_{22}^2 \beta^2}{\alpha}>0.
  \end{array}
\right.
\end{align}
Recalling \eqref{cons}, the above conditions are equivalent to
\begin{align}
\left\{
  \begin{array}{ll}
    (a_{11}^2+a_{12}^2)N_1<16 m_1, \\
    \frac{(a_{11}^2+a_{12}^2)N_1}{m_1}+\frac{(a_{21}^2+a_{22}^2)N_2}{m_2}<16+\frac{(a_{11}a_{22}-a_{12}a_{21})^2 N_1N_2}{16m_1m_2}.
  \end{array}
\right.
\end{align}
Consequently, there exists $\delta_0>0$ such that
\begin{align}
XPX^T \ge \delta_0 |X|^2.
\end{align}
Returning to \eqref{07095}, we conclude that
\begin{align*}
&\frac{d}{dt} (A(t)+B(t)) \\
 \le &-a \int_{\R^d} \left( \nabla u^{\frac{m_1-1}{2}}-\frac{b}{2a} \nabla c \right)^2 dx-\alpha \int_{\R^d} \left( \nabla v^{\frac{m_2-1}{2}}-\frac{\beta}{2\alpha} \nabla z \right)^2 dx \\
&-\delta_0 \int_{\R^d} (u-N_1)^2+(v-N_2)^2+(c-c_*)^2+(z-z_*)^2 dx,
\end{align*}
Choosing $\delta=\min\{a,\alpha,\delta_0\}$, we arrive at \eqref{0630} and thus complete the proof.  \quad $\square$
\end{proof}

As a direct application of the energy inequality \eqref{0630}, we obtain the following convergence result.
\begin{lemma}\label{lem2}
Under the assumptions of Lemma \ref{lem1}, let $(u,v)$ be the global solution obtained in Theorem \ref{th1}. Then there exist constants $C>0$ and $\lambda>0$ such that
\begin{align}
\int_{\R^d} (u-N_1)^2+(v-N_2)^2 dx \le C e^{-\lambda t} \quad \text{for all } t>t_0
\end{align}
for some $t_0>0$.
\end{lemma}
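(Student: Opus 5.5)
The plan is to turn the dissipation inequality \eqref{0630} of Lemma \ref{lem1} into a differential inequality $\frac{d}{dt}(A+B)\le -\mu (A+B)$. Then Gronwall's lemma gives exponential decay of $A+B$, and a reverse comparison transfers this decay to $\int (u-N_1)^2+(v-N_2)^2\,dx$.

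\textbf{Step 1: pointwise comparison.} The function $\phi_N(s)=s-N-N\ln(s/N)$ is convex on $(0,\infty)$, satisfies $\phi_N(N)=\phi_N'(N)=0$, and has $\phi_N''(s)=N/s^2$. Theorem \ref{th1} gives a uniform bound $\|u\|_\infty+\|v\|_\infty\le K$. On $[0,K]$ the second derivative satisfies $\phi_N''\ge N/K^2$, so
\[
\phi_{N_1}(u)\ge \tfrac{N_1}{2K^2}(u-N_1)^2
\]
for all values of $u$. The upper comparison $\phi_{N_1}(u)\le C(u-N_1)^2$ holds only where $u$ is bounded away from $0$, since $\phi_{N}(s)\to\infty$ as $s\to0$.

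\textbf{Step 2: lower bound on $u,v$.} This is the step I expect to be the main obstacle. To get $\phi_{N_1}(u)\le C(u-N_1)^2$ for $t>t_0$, I would first show that $\|u-N_1\|_{L^\infty}\to0$ (similarly for $v$). Integrating \eqref{0630} in time gives $\int_0^\infty F(t)\,dt\le A(0)+B(0)$, hence $\int_0^\infty\|u-N_1\|_2^2+\|v-N_2\|_2^2\,dt<\infty$. Finiteness of $A(0)+B(0)$ is implicitly assumed by the statement, and I would assume it explicitly if necessary. Next I would bound the time derivative of $\|u-N_1\|_2^2$ using the uniform $L^\infty$ bound and the gradient integrability \eqref{07102}. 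This should give uniform continuity in time, so $\|u(t)-N_1\|_2\to0$. Interpolating with a uniform Hölder bound from degenerate parabolic regularity gives $\|u(t)-N_1\|_\infty\to0$. Thus there is $t_0$ with $u\ge N_1/2$ and $v\ge N_2/2$ for $t\ge t_0$. If the Hölder regularity is unavailable, the fallback is to split $\R^d$ into $\{u\ge N_1/2\}$ and its complement. On the complement, $(u-N_1)^2\ge N_1^2/4$, and $\phi$ grows only logarithmically. The contribution of the set $\{u<\eta\}$ is the genuine difficulty, which is why the $L^\infty$ convergence route is preferred.

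\textbf{Step 3: closing the estimate.} For $t\ge t_0$, Step 2 gives
\[
A+B\le C_1\int (u-N_1)^2+(v-N_2)^2\,dx\le C_1F(t).
\]
Then \eqref{0630} yields $\frac{d}{dt}(A+B)\le -\frac{\delta}{C_1}(A+B)$, so $A(t)+B(t)\le (A(t_0)+B(t_0))e^{-\lambda (t-t_0)}$ with $\lambda=\delta/C_1$. Finally, Step 1 gives
\[
\int (u-N_1)^2+(v-N_2)^2\,dx\le \frac{2K^2}{\min(N_1,N_2)}\,(A+B)\le Ce^{-\lambda t}\qquad\text{for } t>t_0,
\]
which is the claim of the lemma.
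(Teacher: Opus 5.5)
Your skeleton is the same as the paper's. You integrate \eqref{0630} in time, control the time derivative of $f(t)=\int(u-N_1)^2+(v-N_2)^2\,dx$ to get $f(t)\to0$, compare $A+B$ with $f$ from both sides for $t>t_0$, and close with Gronwall. Your lower comparison $\phi_{N}(s)\ge \frac{N}{2K^2}(s-N)^2$ on $(0,K]$ (with $K\ge\max\{N_1,N_2\}$) holds for all $t$. It is cleaner than the paper's appeal to $g(x)\sim (x-x_*)^2/(2x_*)$. You are also right that the upper comparison $A\le C\int(u-N_1)^2\,dx$ needs $u$ bounded away from $0$, which $L^2$-smallness does not give. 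The paper does not resolve this: it simply asserts \eqref{071011}--\eqref{071012} from $\|u-N_1\|_2\to0$, so there is nothing there you could borrow.

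The genuine gap is your Step 2. It relies on a uniform-in-time H\"older bound on $\R^d$ that nothing in the paper provides. More seriously, its target cannot hold in this setting.

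\begin{itemize}
\item Solutions lie in $L^1(\R^d)$, by the definition of weak solution and \eqref{um1vm1}. By Chebyshev, $|\{u(t)>N_1/2\}|\le 2\|u(t)\|_1/N_1<\infty$, so $u(t)\le N_1/2$ on a set of infinite measure.
\item Hence $\|u(t)-N_1\|_{L^\infty}\ge N_1/2$ for every $t$.
\item For $0<s\le N_1/2$ we have $\phi_{N_1}(s)\ge\phi_{N_1}(N_1/2)=N_1(\ln2-\tfrac12)>0$ and $(s-N_1)^2\ge N_1^2/4$. Therefore $A(t)=+\infty$ and $\int_{\R^d}(u-N_1)^2\,dx=+\infty$.
\end{itemize}

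Three things follow for your proposal. The hypothesis $A(0)+B(0)<\infty$ you propose to add contradicts \eqref{initialdata}. The bound $\int_0^\infty F\,dt<\infty$ cannot be obtained. Your fallback splitting fails because $\{u<\eta\}$ has infinite measure.

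The paper's proof hits the same wall: its $A(1)+B(1)<\infty$ and $f(t)\to0$ are equally unavailable. In fact the left-hand side of the lemma is $+\infty$, so no argument can close in this whole-space $L^1$ framework.

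Your outline becomes a viable strategy only after changing the setting, e.g.\ a bounded domain, or data with $u_0-N_1,\,v_0-N_2\in L^2$ and $\inf u_0,\inf v_0>0$. Even then, Step 2 needs an actual proof of a uniform positive lower bound for $u,v$ at large $t$. One route is uniform H\"older continuity for $t\ge1$ combined with $f(t)\to0$.
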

\begin{proof}
Define 
\begin{align}
f(t):=\int_{\R^d}  (u-N_1)^2+(v-N_2)^2 dx.
\end{align}
The proof can be divided into two steps. First, we prove that $f(t)$ tends to zero as time goes to infinity. Then, we show that the global solution $(u,v)$ converges exponentially in time to the constant equilibrium $(N_1,N_2)$.

{\it\textbf{Step 1}} (Asymptotic behavior of $f(t)$) \quad We first define an auxiliary function 
\begin{align}\label{gx}
g(x)=x-x_*-x_* \ln \frac{x}{x_*}.
\end{align} 
A simple computation yields that $g(x) \ge 0$ for all $x>0$. Therefore, recalling the definitions of $A(t)$ and $B(t)$ given in \eqref{At} and \eqref{Bt}, we have
\begin{align}
A(t) \ge 0, \quad B(t) \ge 0.
\end{align}
On the other hand, $F(t)$ as given by \eqref{Ft} satisfies
\begin{align}
F(t) \ge f(t)\quad \text{for all } t>0.
\end{align}
Hence, we infer from Lemma \ref{lem1} that
\begin{align}\label{071010}
\frac{d}{dt} (A(t)+B(t)) \le -\delta f(t) \quad \text{for all } t>0.
\end{align}
It follows that
\begin{align}\label{07108}
\int_1^\infty f(t) dt \le \frac{1}{\delta} (A(1)+B(1)-A(\infty)-B(\infty)) \le \frac{1}{\delta} (A(1)+B(1))<\infty.
\end{align}
Next, we compute
\begin{align}
f_t =&2 \int_{\R^d} (u-N_1)u_t+(v-N_2)v_t dx \nonumber \\
=& 2 \int_{\R^d} \left( -\frac{4m_1}{(m_1+1)^2} \left| \nabla u^{\frac{m_1+1}{2}} \right|^2+\frac{1}{\sigma_1+1} u^{\sigma_1+1} (-\Delta c)-u(u-N_1)^2 \right)dx \nonumber \\
& + 2 \int_{\R^d} \left( -\frac{4m_2}{(m_2+1)^2} \left| \nabla v^{\frac{m_2+1}{2}} \right|^2+\frac{1}{\sigma_2+1} v^{\sigma_2+1} (-\Delta z)-v(v-N_2)^2 \right)dx.
\end{align}
Thanks to the regularities \eqref{07101}-\eqref{07103}, it holds that
\begin{align}\label{07140}
\int_0^\infty |f_t|dt<\infty.
\end{align}
Combining \eqref{07140} with \eqref{07108}, we arrive at
\begin{align}\label{ft}
f(t) \to 0 \quad \text{as}\quad  t \to \infty.  
\end{align}

{\it\textbf{Step 2}} (Exponential decay in time of $f(t)$) \quad Recalling the function \eqref{gx}, we have the asymptotic estimate
\begin{align}
g(x) \sim \frac{1}{2 x_*} (x-x_*)^2 \quad \text{as} \quad x \to x_*.
\end{align}
Since $\int_{\R^d}(u-N_1)^2dx \to 0$ as $t \to \infty$, we can choose $t_0>0$ such that
\begin{align}\label{071011}
\frac{1}{3N_1} \int_{\R^d} (u-N_1)^2 dx \le \int_{\R^d} \left(u-N_1-N_1 \ln \frac{u}{N_1}\right) dx \le \frac{1}{N_1} \int_{\R^d} (u-N_1)^2 dx \text{ for all } t>t_0.
\end{align}
Similarly, we have
\begin{align}\label{071012}
\frac{1}{3N_2} \int_{\R^d} (v-N_2)^2 dx \le \int_{\R^d} \left(v-N_2-N_2 \ln \frac{v}{N_2}\right) dx \le \frac{1}{N_2} \int_{\R^d} (v-N_2)^2 dx \text{ for all } t>t_0.
\end{align}
Hence, we infer from \eqref{071010} that
\begin{align}
\frac{d}{dt} (A(t)+B(t)) \le -\delta f(t) \le -\delta_1 (A(t)+B(t)) \text{ for all } t>t_0,
\end{align}
which implies
\begin{align}
A(t)+B(t) \le c_1 e^{-c_2 t} \text{ for all } t>t_0.
\end{align}
Therefore, owing to \eqref{071011} and \eqref{071012}, we conclude that
\begin{align}
f(t) \le c_3 \left(A(t)+B(t) \right) \le C e^{-c_2 t} \text{ for all } t>t_0.
\end{align}
This completes the proof. \quad $\square$
\end{proof}

\indent\textbf{Proof of Theorem \ref{th2}.} 
Combining Lemma \ref{lem1} with Lemma \ref{lem2} establishes Theorem \ref{th2}.

\end{document}